\DeclareMathOperator{\Var}{Var}
\DeclareMathOperator{\supp}{supp}
\newcommand{\bC}{\ensuremath{\mathbb{C}}}
\newcommand{\bE}{\ensuremath{\mathbb{E}}}
\newcommand{\bP}{\ensuremath{\mathbb{P}}}
\newcommand{\bZ}{\ensuremath{\mathbb{Z}}}
\newcommand{\ind}{\ensuremath{\mathbbm{1}}}
\newcommand{\cA}{\ensuremath{\mathcal{A}}}
\newcommand{\cB}{\ensuremath{\mathcal{B}}}
\newcommand{\bs}{\backslash}
\newcommand{\abs}[1]{\left\vert \, #1 \, \right\vert}
\newcommand{\norm}[1]{\left\Vert \, #1 \, \right\Vert}
\newcommand{\ddx}[1][1]{\ifnum#1=1 \frac{d}{dx} \else \frac{d^{#1}}{dx^{#1}} \fi}
\newcommand{\ddy}[1][1]{\ifnum#1=1 \frac{d}{dy} \else \frac{d^{#1}}{dy^{#1}} \fi}
\newcommand{\ddt}[1][1]{\ifnum#1=1 \frac{d}{dt} \else \frac{d^{#1}}{dt^{#1}} \fi}
\newtheorem{theorem}{Theorem}[section]
\newtheorem{lemma}[theorem]{Lemma}
\newtheorem{proposition}[theorem]{Proposition}
\begin{document}

\title{The super-critical contact process has a spectral gap} 
\author{Florian V\"ollering\thanks{University of G\"ottingen, IMS, Goldschmidtstra\ss e 7
37077 G\"ottingen,
Germany \newline email: florian.voellering@mathematik.uni-goettingen.de, tel.: +49 551 3913520}}
\date{}
\maketitle

\begin{abstract}
  We consider the super-critical contact process on $\bZ^d$. It is known that measures which dominate the upper invariant measure $\mu$ converge exponentially fast to $\mu$. However, the same is not true for measures which are below $\mu$, as the time to infect a large empty region is related to its diameter. The result of this paper is the existence of a spectral gap in $L^2(\mu)$, that is, the spectrum of the generator is empty inside an open strip $\{z\in\bC: -\lambda<\Im(z)<0\}$ of the complex plane. This is equivalent to the fact that the variance of the semi-group of the contact process decays exponentially fast. It is perhaps surprising that the existence of the spectral gap has not been proven before. One of the reasons is that the contact process is non-reversible, and hence many methods from spectral theory are not applicable. 
\end{abstract}

{{\bf MSC classes:} 60K35 (primary), 82C22 (secondary)}

{{\bf Keywords:} super-critical contact process, $L^2$ spectral gap, integer lattice}

\section{Introduction}
The contact process on $\bZ^d$ is a well-studied process. It is typically framed as a simple model for infections, where healthy individuals can be infected by their neighbours, and infected individuals recover at a constant rate. Depending on infection and recovery rates there is a phase transition. If the infection rate is larger than a critical value, then infections can survive indefinitely, and if it is smaller than the critical value, then infections vanish exponentially fast. 

Let $\Omega=\{0,1\}^{\bZ^d}$. For $\eta\in\Omega$ and $x\in\bZ^d$, we denote by $\eta^x$ the configuration $\eta$ flipped at $x$, meaning that $\eta^x$ is identical to $\eta$ except at site $x$, where a 1 is replaced by a 0 and vice versa. By slight abuse of notation we also write $\eta = \{ x\in\bZ^d: \eta(x)=1\}$. We say a site $x\in\bZ^d$ in a configuration $\eta\in\Omega$ is infected if $x\in\eta$.

The contact process with infection rate $\lambda>0$ and recovery rate 1 is the Markov process with generator
\begin{align}
 L f (\eta) = \sum_{x\in\bZ^d}(1-\eta(x))\lambda \sum_{y\sim x} \eta(y)[f(\eta^x)-f(\eta)] + \sum_{x\in\bZ^d} \eta(x)[f(\eta^x)-f(\eta)].
\end{align}
Here $\sum_{y\sim x}$ denotes the summation over nearest neighbours of $x$. For a subset $A\subset \bZ^d$ we denote by $(\xi_t^A)_{t\geq0}$ the contact process started with exactly all sites in $A$ infected. If $A=\{x\}$, we simply write $\xi_t^x$. By $(P_t)_{t\geq0}$ we denote the semi-group generated by $L$.

The contact process exhibits a phase transition in $\lambda$ with critical value $\lambda_c$. 
If $\lambda>\lambda_c$, then the contact process started from a single infection survives with a positive probability, and if $\lambda\leq\lambda_c$ it goes extinct almost surely. We will only consider the super-critical case $\lambda>\lambda_c$. In this case, there is a unique non-trivial invariant ergodic measure, the upper invariant measure, which we denote by $\mu$. The variance of a function $f:\Omega\to\bC$ with respect to $\mu$ is 
\[ \Var_\mu(f) = \int \abs{f-\int f\,d\mu}^2d\mu. \]

Before going into more detail let us state the main theorem
\begin{theorem}
 There exists an $\alpha>0$ so that the set $\{z\in\bC : -\alpha/2 < \Re(z)<0 \}$ is contained in the resolvent set of the generator $L$ in $L^2(\mu)$. Equivalently, for all $f\in L^2(\mu), t\geq0$,
\begin{align}\label{eq:L2-decay}
 \Var_\mu(P_t f) \leq e^{-\alpha t}\Var_\mu(f).
\end{align}
\end{theorem}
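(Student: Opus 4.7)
The strategy is to exploit two known features of the super-critical contact process:
(a) the exponential-in-volume decay of the $\mu$-probability of seeing a large completely-uninfected region, and
(b) the exponential-in-time convergence of $P_t\nu$ to $\mu$ when $\nu$ stochastically dominates $\mu$ (the input mentioned in the abstract). Because $L$ is non-reversible, a Poincar\'e argument for the symmetric part $(L+L^*)/2$ is not directly available; instead I will bound the variance by decomposing the starting configuration $\eta\sim\mu$ according to the size of its largest empty region near the support of $f$, and handling the two regimes separately.

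By density of cylinder functions in $L^2(\mu)$ it suffices to establish \eqref{eq:L2-decay} for bounded local $f$ with support $A$, with constants that can afterwards be absorbed into $\Var_\mu(f)$ by a standard semigroup bootstrap. Write
\begin{align}
 \Var_\mu(P_t f)=\int\bigl|P_tf(\eta)-\mu(f)\bigr|^2\,d\mu(\eta),
\end{align}
and partition $\eta$ according to the event $E_r$ that in a large ball $B_R\supset A$ no ``hole'' of linear size exceeding $r$ occurs in $\eta$, for a scale $r=r(t)$ to be chosen.

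On $E_r$, the plan is to construct a coupling in which the process started from $\eta$ merges with a process started from a $\mu$-dominating configuration $\hat\eta\geq\eta$ on a time scale of order $r$: since by hypothesis only holes of radius at most $r$ must be filled in, the super-critical growth rates give that $\eta$ reaches a configuration dominating $\mu$ within time $O(r)$ with failure probability at most $O(e^{-cr})$. Combined with (b), this yields
\begin{align}
 \bigl|P_tf(\eta)-\mu(f)\bigr|\leq C\|f\|_\infty\,e^{-c(t-r)}\quad\text{on }E_r.
\end{align}
On $E_r^c$ one uses (a) to bound $\mu(E_r^c)\leq C e^{-c r^d}$. Setting $r=\theta t$ for $\theta>0$ small enough then balances the two contributions and produces the desired exponential rate $\alpha>0$.

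The principal obstacle is the merging step on $E_r$: one must quantify, uniformly in the combinatorial structure of an arbitrary $\eta\in E_r$, that after time $O(r)$ the process dominates a configuration distributed like $\mu$, with exponentially small failure probability. This requires a simultaneous restart/renormalisation argument in which many super-critical seeds grow and tile $B_R$ with blocks of good percolation, with quantitative control of growth speed and survival. Extracting from this a single exponential rate $\alpha$ that is uniform in the choice of local function $f$ (and independent of the precise support $A$ after normalisation by $\Var_\mu(f)$) is the heart of the argument; it is where the non-reversibility truly bites, since the inputs (a) and (b) alone do not automatically compose into an $L^2(\mu)$-gap without this pathwise merging bridge.
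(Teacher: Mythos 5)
Your decomposition of the starting configuration into a ``small holes'' event $E_r$ and its complement is plausible as a device for showing that $P_tf(\eta)$ is close to $\mu(f)$ \emph{pointwise} with high $\mu$-probability, and the balancing $r=\theta t$ is the natural choice. But the scheme runs into the exact obstacle the paper is built to overcome, and the step where you wave it away is where the argument collapses. On $E_r$ you would get a bound of the form
\begin{align}
 \bigl|P_tf(\eta)-\mu(f)\bigr|\leq C\,\norm{f}_\infty\,e^{-c(t-r)},
\end{align}
and after integrating and balancing with $\mu(E_r^c)$ this yields at best
\begin{align}
 \Var_\mu(P_tf)\leq C'\norm{f}_\infty^2\,e^{-c' t}.
\end{align}
This is \emph{not} the inequality $\Var_\mu(P_tf)\leq e^{-\alpha t}\Var_\mu(f)$ and does not imply it: $\norm{f}_\infty^2$ is not controlled by $\Var_\mu(f)$, so for a fixed $t$ and fixed variance the right-hand side can be made arbitrarily large. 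There is no ``standard semigroup bootstrap'' that converts the former into the latter; the bootstrap $\Var_\mu(P_{t+s}f)\leq C'\norm{P_sf}_\infty^2 e^{-c't}$ doesn't help because $\norm{P_sf}_\infty$ does not shrink to something comparable with $\sqrt{\Var_\mu(f)}$. The paper itself first proves precisely such a ``wrong-norm'' inequality (Proposition \ref{prop:var-decay}, with $\sum_x\delta_f(x)^2$ in place of $\norm{f}_\infty^2$) and explicitly notes that one cannot extract the spectral gap from it by naive spectral reasoning because the process is non-reversible.

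The idea your proposal is missing is the passage through a finite-volume approximation. The paper reproves the wrong-norm variance bound for the contact process on $\Lambda_N$ with infected boundary condition (Proposition \ref{prop:finite-var-decay}), where the generator $-L_N$ is a finite matrix with discrete spectrum. For an eigenfunction $f$ with eigenvalue $\lambda$ one has \emph{exactly} $\Var_{\mu_N}(P_{N,t}f)=e^{-2\Re(\lambda)t}\Var_{\mu_N}(f)$; comparing this identity to the wrong-norm bound, which decays like $e^{-\alpha t}$ up to a polynomial, and letting $t\to\infty$, forces $\Re(\lambda)\geq\alpha/2$. This gives a spectral gap on $\Lambda_N$ that is \emph{uniform in $N$}, and one then passes to the limit using $\mu_N\to\mu$ locally in total variation and $P_{N,t}f\to P_tf$ for local $f$ (Lemmas \ref{lemma:total-variation} and \ref{lemma:approximation}). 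Without a device of this kind to upgrade the wrong-norm estimate to the variance estimate, your proposal stops short of the theorem.

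Two smaller points. First, the input (b) you invoke (exponential convergence when starting from a measure dominating $\mu$) is not used by the paper and is not obviously enough for your merging step: on $E_r$ you must show that an \emph{arbitrary, deterministic} $\eta$ without large holes reaches, within time $O(r)$, a configuration whose law dominates $\mu$; this is a quantitative restart/renormalisation argument in its own right, and you rightly flag it as the hard technical core of your route. Second, the holes-of-size-$r$ decomposition near $\supp(f)$ breaks the freedom to let $f$ be an arbitrary $L^2(\mu)$ function; the paper avoids this by proving its estimates in terms of $\delta_f$, which are additive over sites, and then doing the eigenfunction comparison where $f$ is arbitrary on the finite space. Your normalisation step ``after normalisation by $\Var_\mu(f)$'' is asserted, not argued, and is exactly where the difficulty hides.
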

The proof of the theorem will be done in three steps.  After a brief introduction to the graphical construction in Section \ref{section:graphical} we will show in Section \ref{section:Zd} that $\Var_\mu( P_t f)$ decays exponentially fast with rate $-\alpha+o(1)$. However, in contrast to \eqref{eq:L2-decay} the right hand side has a different dependency on $f$ than via the variance of $f$. In Section \ref{section:finite} we will show that the same result holds for the contact process in a finite box with an infected boundary condition. On the finite box we can then obtain the spectral gap. Finally in Section \ref{section:gap} we lift the result for finite boxes back to the infinite lattice.

\section{Graphical construction}\label{section:graphical}
A widely used method for understanding the contact process is the graphical construction. Let $(N_t^{xy})_{t\geq0}$ be independent Poisson processes with rate $\lambda$, for each directed edge $xy$ of $\bZ^d$. The jump events of $N^{xy}$ correspond to infection events across the arrow connecting $x$ to $y$. That is if site $x$ is infected before the infection event, then afterwards $y$ is as well. Let $(N^x_t)_{t\geq0}$ be independent Poisson processes of rate 1, at each site $x\in\bZ^d$, whose jump times correspond to recovery events. After a recovery event at a site $x$ a possible infection of $x$ is cured. The value of the contact process $\xi_t^A$ can then be understood as a function of $(N^{xy}_s)_{0\leq s\leq t}$ and $(N^x_s)_{0\leq s \leq t}$, as infection events and recovery events determine the evolution of the initial set of infected sites $A$. See \cite{LIGGETT:99} for a more in-depth review of this construction. We will denote the law of the graphical construction by $\bP$, and the corresponding 
expectation by $\bE$. Hence we have $P_t f(\eta) = \bE f(\xi_t^\eta)$.

Besides giving a natural understanding of the dynamics the graphical construction has the advantage that it provides a natural coupling between contact processes. Two contact processes with different initial configurations can be coupled by using the same infection and recovery events for the two different initial sets of infections.

The contact process also satisfies a well-known self-duality relation, which is the following: For two subsets $A,B\subset \bZ^d$,
\begin{align}
 \bP(\xi^A_t \cap B \neq \emptyset ) = \bP(\xi^B_t\cap A\neq \emptyset).
\end{align}
This follows by reversing time and direction of arrows in the graphical construction.

Self-duality also gives insight into the upper invariant measure $\mu$. Let $A$ be a subset $A$ of $\bZ^d$. We say $A$ is not infected if all sites in $A$ are not infected. We say $\xi^A$ becomes extinct if $\xi^A_t=\emptyset$ for some $t\geq0$. Since 
\begin{align}
\lim_{t\to\infty}\bP(\xi^{\bZ^d}_t\cap A = \emptyset) = \mu(\{\text{$A$ is not infected}\}),                                                                               
\end{align}
we have
\begin{align}\label{eq:mu-duality}
\mu(\{\text{$A$ is not infected}\}) = \bP(\xi^A \text{ becomes extinct}).
\end{align}

\section{Estimates on \texorpdfstring{$\bZ^d$}{Z\^{ }d}}\label{section:Zd}
For complex variables $z\in\bC$, we write $\abs{z}^2=z\cdot\overline{z}$.
Let $f:\Omega\to\bC$. We write 
\begin{align}\label{eq:deltas}
 \delta_f(x) := \sup_{\eta\in\Omega}\abs{f(\eta^x)-f(\eta)}.
\end{align}
The main result of this section is the following estimate on the $L^2$-decay of the semi-group of the contact process.
\begin{proposition}\label{prop:var-decay}
There are constants $\alpha,C>0$ so that for any $f:\Omega\to\bC$ with $\sum_{x\in\bZ^d}\delta_f(x)^2<\infty$,
\begin{align}\label{eq:prop-var-decay}
\Var_\mu(P_tf)\leq C \int_t^\infty (s+1)^{3d} e^{-\alpha s}\,ds \sum_{x\in\bZ^d}\delta_f(x)^2.
\end{align}
\end{proposition}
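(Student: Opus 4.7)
The plan is to use the non-reversible Dirichlet form combined with self-duality and super-critical extinction estimates. Although $L$ is not self-adjoint on $L^2(\mu)$, invariance of $\mu$ applied to $\int L|g|^2\,d\mu = 0$ together with the identity $|g(\eta^x)|^2 - |g(\eta)|^2 = 2\Re(\overline{g(\eta)}(g(\eta^x) - g(\eta))) + |g(\eta^x) - g(\eta)|^2$ yields the clean Dirichlet-form expression
\[
\cE(g, g) := -\Re\langle Lg, g\rangle_\mu = \frac{1}{2}\int \sum_{x \in \bZ^d} c(\eta, x)\,|g(\eta^x) - g(\eta)|^2\,d\mu(\eta),
\]
where $c(\eta, x)$ is the total flip rate at $x$, bounded by $2d\lambda + 1$. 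Combined with $\frac{d}{ds}\Var_\mu(P_s f) = -2\cE(P_s f, P_s f)$ and $\Var_\mu(P_s f) \to 0$, this gives
\[
\Var_\mu(P_t f) = 2\int_t^\infty \cE(P_s f, P_s f)\,ds,
\]
which already provides the integral structure of \eqref{eq:prop-var-decay}. It remains to show $\cE(P_s f, P_s f) \leq C(s+1)^{3d}e^{-\alpha s}\sum_x \delta_f(x)^2$.

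For each $x$ I would bound $\int |P_s f(\eta^x) - P_s f(\eta)|^2\,d\mu$ using the graphical construction and duality. When $\eta(x) = 0$, monotonicity gives $\xi_s^\eta \subseteq \xi_s^{\eta\cup\{x\}}$, and a site $y$ lies in the symmetric difference iff the backward dual path from $(y, s)$ to time $0$ reaches $x$ while the dual set at time $0$ avoids $\eta$. Telescoping $f$ yields $|P_s f(\eta^x) - P_s f(\eta)| \leq \sum_y \delta_f(y)\,p_{x,y}(\eta)$, where $p_{x,y}(\eta)$ is the $\eta$-conditional probability of that dual event. Squaring, integrating against $\mu$, using two independent copies of the graphical construction on $[0,s]$ to factor the cross product $p_{x,y}(\eta) p_{x,z}(\eta)$ into two independent random sets $A_y, A_z$ at time $0$ (distributed as $\xi_s^{\{y\}}, \xi_s^{\{z\}}$ by self-duality), and applying \eqref{eq:mu-duality} to the remaining $\eta$-integration, each cross term reduces to
\[
\bE\!\left[\ind\{x \in A_y \cap A_z\}\,\bP(\xi^{A_y \cup A_z} \text{ becomes extinct})\right].
\]

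Finite speed of propagation confines $A_y \subset B(y, Cs)$ and $A_z \subset B(z, Cs)$, restricting nontrivial contributions to $y, z \in B(x, Cs)$ and $|y-z| \leq 2Cs$. The essential super-critical input is twofold: (a) the atypical event $\{A_y \neq \emptyset, |A_y| < c s^d\}$ has probability $\leq C e^{-\alpha s}$ (equivalent to the exponential convergence $\bP(\xi_s^y \neq \emptyset) \to \rho$, coming from Bezuidenhout--Grimmett block renormalization), while (b) on the complementary event $|A_y| \geq c s^d$, the extinction probability $\bP(\xi^{A_y} \text{ becomes extinct})$ is super-exponentially small because the large spread-out starting set has many essentially independent chances of surviving. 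Combining these via $\bP(\xi^{A_y \cup A_z} \text{ ext})^{1/2} \leq \bP(\xi^{A_y}\text{ ext})^{1/2}$ and independence of $A_y, A_z$ bounds each cross-term expectation by $C e^{-\alpha s}$ up to a polynomial prefactor. Bounding the intersection $|A_y \cap A_z| \leq (Cs)^d$, the $x$-sum by $|B(y, Cs)| = (Cs)^d$, and the double-site sum by $\sum_{|y-z|\leq 2Cs}\delta_f(y)\delta_f(z) \leq (2Cs)^d \sum_y \delta_f(y)^2$ via $2ab \leq a^2 + b^2$, the cumulative polynomial factor is $(s+1)^{3d}$ paired with $e^{-\alpha s}$, delivering the required bound on $\cE(P_s f, P_s f)$. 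The main obstacle is input (a): establishing the exponentially small small-mass survival tail requires the Bezuidenhout--Grimmett block construction; the space-time bookkeeping producing exactly the exponent $3d$ is delicate but mechanical once (a)--(b) are in hand.
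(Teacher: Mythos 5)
You follow the paper's broad strategy---write $\Var_\mu(P_tf)=2\int_t^\infty\cE(P_sf,P_sf)\,ds$ via stationarity, bound the Dirichlet form by discrepancy propagation under the graphical coupling, and feed in self-duality together with supercritical extinction and shape-theorem ingredients---but you diverge at the treatment of $|P_sf(\eta^x)-P_sf(\eta)|^2$. The paper applies Jensen and Cauchy--Schwarz to obtain $|P_sf(\eta^x)-P_sf(\eta)|^2\leq\bE[|\Delta_s|\sum_y\ind_{y\in\Delta_s}\delta_f(y)^2]$ with $\Delta_s=\xi_s^\eta\Delta\xi_s^{\eta^x}$, pulls $\sum_x\delta_f(x)^2$ out cleanly by translation invariance, and thereby reduces everything to a single estimate $\int\bE|\Delta_s|^2\,d\mu\leq c(s+1)^{3d}e^{-\alpha s}$ (Lemma~\ref{lemma:cluster}), which is organized by the radius of the smallest cube containing $\Delta_s$ and rests on the single-site Lemma~\ref{lemma:discrepancy}. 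You instead expand the square into cross terms $\delta_f(y)\delta_f(z)p_{x,y}(\eta)p_{x,z}(\eta)$, decouple the product via two independent copies of the graphical construction into $\bE[\ind\{x\in A_y\cap A_z\}\bP(\xi^{A_y\cup A_z}\text{ ext})]$, use $\bP(\xi^{A\cup B}\text{ ext})\leq\bP(\xi^A\text{ ext})^{1/2}\bP(\xi^B\text{ ext})^{1/2}$, and diagonalize via $2ab\leq a^2+b^2$ only at the end. Both routes invoke the same external inputs (Garet--Marchand shape LDP, Liggett--Steif stochastic domination, Kesten's FPP tails, exponential extinction estimates). The Jensen route has the advantage of separating all $\delta_f$-dependence from the probabilistic estimate, which makes the polynomial bookkeeping transparent; your cross-term route is workable but two points need tightening: the contact process has only exponential tails rather than a strict finite speed, so ``confines $A_y\subset B(y,Cs)$'' must be read as ``up to an exponentially small exceptional event'' that has to be accounted for separately, and your $(s+1)^{3d}$ tally double-counts---summing $\ind\{x\in A_y\cap A_z\}$ over $x$ already produces $|A_y\cap A_z|$, so the ``$x$-sum volume'' and the ``intersection bound'' are the same $\sim s^d$ factor and your route actually yields $(s+1)^{2d}$. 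Neither issue affects the conclusion, since any fixed polynomial prefactor is absorbed by the exponential.
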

Note how on the right hand side a different (semi-)norm than the $L^2$-norm appears. As the contact process is not reversible one cannot obtain the spectral gap from \eqref{eq:prop-var-decay} by simple spectral theory.

In this section we will prove a sequence of estimates aimed at proving Proposition \ref{prop:var-decay}. Often, various constants appear in the estimates. These constants are simply labelled $c_1, c_2, ...$. They are assumed to be positive, may depend on the model parameters, and are not assumed to be the same between different lemmas unless stated otherwise.

We will make use of the following well-known facts about the super-critical contact process. They can for example be found in \cite{LIGGETT:99}, Theorem 2.30.
\begin{lemma}\label{lemma:liggett}
 There are constants $c_1,c_2>0$ so that for any $A\subset \bZ^d$, $t\geq0$,
\begin{enumerate}
 \item $\bP (\xi^A \text{becomes extinct}) \leq c_1e^{-c_2\abs{A}}$;
\item $\bP( \xi^A \text{becomes extinct}, \xi^A_t\neq \emptyset) \leq c_1e^{-c_2 t}$.
\end{enumerate}
\end{lemma}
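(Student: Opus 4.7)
The plan is to base both estimates on the Bezuidenhout--Grimmett renormalization, which compares the super-critical contact process with a super-critical oriented percolation on a coarse-grained space-time lattice. Concretely, one fixes a large spatial box and a long time $T$, and declares a space-time block \emph{good} if the contact process restricted to that box at the starting time produces a sufficiently high density of infections in each of several adjacent boxes at time $T$. For $\lambda>\lambda_c$ one can arrange that the block events have marginal probability arbitrarily close to $1$ with uniformly finite-range dependence, hence they stochastically dominate an independent super-critical oriented percolation of parameter $p$ arbitrarily close to $1$.

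For part (a), cover $A$ by a maximal collection of well-separated reference sites, each sitting at the base of its own initial block. By finite-range dependence in the block picture, the events ``the block started at $x\in A$ fails to percolate'' over this separated collection are, up to a deterministic amplification constant, dominated by independent Bernoulli failures of probability $1-\theta$ for some $\theta>0$. Hence
\begin{align}
\bP(\xi^A \text{ becomes extinct}) \leq (1-\theta)^{c\abs{A}} \leq c_1 e^{-c_2\abs{A}},
\end{align}
where the separation cost contributes only the constant $c$ depending on the spatial scale of the block.

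For part (b), the strategy is to combine (a) with the Markov property. Conditioning on $\xi^A_t$ and applying (a) to the configuration at time $t$ gives
\begin{align}
\bP(\xi^A \text{ becomes extinct},\xi^A_t \neq \emptyset)
= \bE\bigl[\ind_{\xi^A_t \neq \emptyset}\,\bP(\xi^{\xi^A_t} \text{ becomes extinct})\bigr]
\leq c_1\,\bE\bigl[\ind_{\xi^A_t \neq \emptyset}\,e^{-c_2\abs{\xi^A_t}}\bigr].
\end{align}
Splitting according to whether $\abs{\xi^A_t}$ is larger or smaller than $\varepsilon t$ yields
\begin{align}
\bE\bigl[\ind_{\xi^A_t \neq \emptyset}\,e^{-c_2\abs{\xi^A_t}}\bigr]
\leq e^{-c_2\varepsilon t} + \bP\bigl(0<\abs{\xi^A_t}<\varepsilon t\bigr).
\end{align}
The remaining probability is the ``no hovering at small size'' estimate for the super-critical contact process, which is again a consequence of the block construction: if $\xi^A$ is non-empty at time $t$ but has not grown to linear size, then at each renormalization scale along $[0,t]$ either a new block has failed to spread or the surviving cluster has thinned out, and the cumulative probability of such an extinction excursion of temporal length $t$ is at most $e^{-c' t}$. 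Combining the two bounds yields the desired $c_1 e^{-c_2 t}$ estimate.

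The main obstacle is the first paragraph: the Bezuidenhout--Grimmett renormalization itself is delicate, since one must show that for \emph{every} $\lambda>\lambda_c$ (not only $\lambda$ very large), one can choose block parameters so that the comparison with super-critical oriented percolation goes through. Once this machinery is in place, parts (a) and (b) are the standard reductions sketched above, which is precisely why the lemma is simply quoted from \cite{LIGGETT:99}.
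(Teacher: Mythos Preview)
The paper does not prove this lemma at all; it simply states the two bounds as well-known facts and cites \cite{LIGGETT:99}, Theorem 2.30. Your sketch is therefore not being compared against an argument in the paper but against the standard proof in Liggett's book, and you correctly identify this yourself in your final paragraph.

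Your outline of the Bezuidenhout--Grimmett block construction and the derivation of part (a) from it is the standard one and is fine. For part (b), your route via the Markov property plus a separate ``no hovering at small size'' estimate is valid but slightly indirect, and the hovering estimate you invoke is itself of comparable depth to what you are trying to prove. The more economical argument, and the one actually carried out in \cite{LIGGETT:99}, is to stay entirely in the renormalized picture: the event $\{\xi^A_t\neq\emptyset,\ \xi^A\text{ becomes extinct}\}$ forces the associated oriented-percolation cluster to be finite yet to reach temporal level at least $\lfloor t/T\rfloor$, and for super-critical oriented percolation the probability that the cluster of a given site is finite but has height $\geq n$ decays like $e^{-cn}$. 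This gives (b) in one step, without splitting on $\abs{\xi^A_t}$.
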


The first lemma we prove concerns itself with the growth of a surviving infection. To be more precise, we will prove that the probability of a single surviving infection not having grown to a size of at least order $t$ is exponentially small.
\begin{lemma}\label{lemma:ldp-survival}
There are constants $c_1,c_2,c_3>0$ so that
\begin{align}\label{eq:growth}
\bP\left( |\xi^y_t|<c_1t \;\middle|\; \xi^y \text{ survives} \right) \leq c_2 e^{-c_3 t}. 
\end{align}
\end{lemma}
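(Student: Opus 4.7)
\emph{Proof proposal.} Since $\bP(\xi^y \text{ survives}) = \theta(\lambda,d) > 0$ is a fixed positive constant, the conditional bound is equivalent to
\begin{align}
\bP(|\xi^y_t| < c_1 t,\ \xi^y \text{ survives}) \leq c_2' e^{-c_3 t}.
\end{align}
My plan is to split this into two pieces: first, a surviving process must reach a large cluster quickly; second, once it is large, it continues to grow at least linearly.

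\textbf{Step 1 (escape from small sizes).} Fix a threshold $k$ to be chosen below, and let $\tau_k := \inf\{s \geq 0 : |\xi^y_s| \geq k\}$. I would show
\begin{align}
\bP(\tau_k > t,\ \xi^y \text{ survives}) \leq (1-\beta^k)^{\lfloor t\rfloor}
\end{align}
for some $\beta = \beta(\lambda,d) \in (0,1)$ independent of $k$. The argument is elementary from the graphical construction: if $|\xi^y_m| \leq k$ at an integer time $m$, then the event on $[m,m+1]$ that every one of the (at most $k$) currently infected sites fires its recovery clock while none of the outgoing infection clocks of these sites fires has probability at least $\beta^k$, with $\beta := (1-e^{-1})e^{-2d\lambda}$; on this event $\xi^y_{m+1} = \emptyset$. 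Multiplying over $m = 0, \dots, \lfloor t \rfloor - 1$ and using that $|\xi^y_m| \leq k$ on $\{\tau_k > t\}$ at each integer $m \leq t$ yields the bound.

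\textbf{Step 2 (linear growth once large).} For $k$ sufficiently large, the Bezuidenhout--Grimmett block construction provides integers $N$ and $T$ and a coupling under which the contact process started from any configuration containing at least $k$ infected sites stochastically dominates a supercritical finite-range-dependent oriented site percolation on $\bZ^d \times \bZ_{\geq 0}$, in such a way that every wet cell at level $j$ contains at least one infected site of the dominated process at time $jT$. For supercritical oriented percolation, a standard lower-tail large-deviation estimate gives, conditionally on percolation, that $\bP(|W_j| < c' j \mid \text{percolates}) \leq C e^{-c'' j}$ for the number $|W_j|$ of wet cells at level $j$. Transferring this back, on $\{\tau_k < \infty\}$ and conditionally on $\cF_{\tau_k}$,
\begin{align}
\bP\left(|\xi^y_{\tau_k + s}| < c\, s,\ \xi^y \text{ survives} \,\middle|\, \cF_{\tau_k}\right) \leq C e^{-c'' s}, \qquad s \geq 0.
\end{align}

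Combining the two steps via the decomposition $\{\tau_k \leq t/2\} \cup \{\tau_k > t/2\}$, with $c_1$ chosen as a suitable fraction of $c$, gives the desired exponential bound. The main obstacle is Step 2: the Bezuidenhout--Grimmett comparison together with the large-deviation estimate for supercritical oriented percolation are nontrivial classical inputs, but both are well-documented. Step 1 and the combination of the two steps are then routine.
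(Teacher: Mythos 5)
Your route is genuinely different from the paper's. The paper cites Garet--Marchand's large-deviation estimate for the shape theorem to conclude that, conditionally on survival, a ball of radius $\propto t$ is both hit and coupled with $\xi^{\bZ^d}$ by time $t$, and then uses the Liggett--Steif stochastic domination of $\mu$ by a Bernoulli product measure plus Bernoulli large deviations to count infected sites in that ball. You instead propose a two-stage argument: escape from small cardinality, then a renormalization/oriented-percolation comparison. Your Step 1 is correct and pleasantly elementary — the event that all currently infected sites fire their recovery clocks in $[m,m+1]$ while none of their outgoing infection clocks fire does use disjoint collections of Poisson clocks for distinct sites, so the factor $\beta^k$ is right, and it kills the process before $m+1$.

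The gap is in Step 2. The Bezuidenhout--Grimmett block construction does not "provide a coupling under which the contact process started from any configuration containing at least $k$ infected sites stochastically dominates a supercritical oriented site percolation". To seed that comparison you need a spatially localized configuration — a macroscopic box with sufficiently many infected sites inside it — not $k$ sites scattered arbitrarily in $\bZ^d$. Step 1 only gives you $|\xi^y_{\tau_k}| = k$, which can be $k$ sites spread over an enormous region; this is not a BG seed. Getting from "$k$ infected sites somewhere" to a good seed, while also converting "conditionally on $\xi^y$ survives" into "conditionally on the dominated oriented percolation percolates", is precisely the restart argument that goes into proving the Garet--Marchand estimate the paper cites. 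As written, the conditional bound in your Step 2 is essentially equivalent to the lemma you are trying to prove (reduce to $k=1$ by monotonicity) and does not follow from the cited BG comparison alone. To fix this, either cite a ready-made result (Garet--Marchand's Theorem 1.1 would do, which collapses your two steps into one), or replace "$|\xi^y_s| \geq k$" in the definition of $\tau_k$ with a genuine BG seed event and rerun the restart/extinction dichotomy at that level; the latter is essentially reproving Garet--Marchand.
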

\begin{proof}
By translation invariance, we can assume $y=0$.
 To prove the lemma, we use large deviations for the shape theorem and stochastic domination of a Bernoulli product measure.
Let 
\begin{align}
 &t(x) := \inf\{t\geq0 : \xi_t^0 (x)=1 \},	\\
&t'(x) := \inf\{t\geq 0: \xi_s^0(x)=\xi_s^{\bZ^d}\,\forall s\geq t\},	\\
&H(t)  := \{x\in\bZ^d : t(x)\leq t\},	\\
&K'(t) := \{x\in\bZ^d : t'(x)\leq t\},	\\
&B_r   := \{x\in\bZ^d : \norm{x}_\infty \leq r \}.
\end{align}
In \cite{GARET:MARCHAND:12}, Theorem 1.1 implies that there are constants $c_4,c_5, c_6>0$ so that
\begin{align}\label{eq:ldp-1}
 \bP\left( B_{c_4t}\not\subset H(t)\cap K'(t) \;\middle|\; \xi^0 \text{ survives} \right)\leq c_5e^{-c_6t}.
\end{align}
Note that for any $\eta\in\Omega$ with $0\in\eta$, $\xi^0\subset \xi^\eta \subset \xi^{\bZ^d}$. Define the good event $G_t:=\{\xi^0 \text{ survives}, B_{c_4t}\subset H(t)\cap K'(t)\}$. Conditioned on $G_t$ the law of the configuration inside the ball $B_{c_4 t}$ is absolutely continuous with respect to $\mu$: Let $\cA$ be an event which depends only on the sites in $B_{c_4 t}$. Then, by the definition of $G_t$ and time invariance of $\mu$,
\begin{align}
  \bP\left(\xi^0_t \in \cA \;\middle|\; G_t \right) &= \int\bP\left(\xi^\eta_t \in \cA \;\middle|\; G_t \right) \mu(d\eta\,|\,0\in\eta) \leq \frac{\mu(\cA)}{\bP(G_t)\mu(0\in\eta)}.\label{eq:density}
\end{align}
In \cite{LIGGETT:STEIF:06}, Corollary 4.1 states that the upper invariant measure $\mu$ of the super critical contact process stochastically dominates a Bernoulli product measure $\nu_\rho$ with some density $\rho>0$. Large deviations for Bernoulli product measures show that the probability of seeing only half as many ones as expected in $n$ trials is exponentially small in $n$. When this is applied to the number of infections in a ball, we get 
\begin{align}
 \mu\left( \abs{B_{c_4t}\cap \eta} < \frac\rho2 \abs{B_{c_4t}} \right)\leq \nu_\rho\left( \abs{B_{c_4t}\cap \eta} < \frac\rho2 \abs{B_{c_4t}} \right)\leq c_7e^{-c_8t}.
\end{align}
Hence, by \eqref{eq:density}, 
\begin{align}\label{eq:ldp-2}
 \bP\left( \abs{\xi^0_t\cap B_{c_4t}} <\frac\rho2 \abs{B_{c_4 t}} \;\middle|\; G_t \right) \leq \frac{c_7e^{-c_{8}t}}{\bP(G_t)\mu(0\in\eta)}. 
\end{align}
Combining \eqref{eq:ldp-1} and \eqref{eq:ldp-2} plus the fact that in the supercritical regime a single infection has a positive probability to survive yields the claim.
\end{proof}

We will now prove that a single discrepancy in the initial configuration will typically vanish quickly and not grow to a large size. 
\begin{lemma}\label{lemma:discrepancy}
There are constants $c_1,c_2,c_3,\alpha>0$ so that
\begin{align}
\int \bP\left(\xi^\eta_t(y)\neq\xi^{\eta^x}_t(y)\right)\,\mu(d\eta) \leq c_1\min\left(e^{-c_2(\norm{x-y}_\infty-c_3t)},e^{-\alpha t}\right).
\end{align}
\end{lemma}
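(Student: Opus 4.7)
The plan is to use the backward (dual) graphical construction to recast the discrepancy event in a form whose $\mu$-average can be controlled by the extinction-probability bound of Lemma~\ref{lemma:liggett}. For a fixed realisation of the Poisson clocks, self-duality yields the pathwise identity $\xi^\eta_t(y) = \ind_{\eta \cap \zeta \neq \emptyset}$, where $\zeta = \zeta^y_t$ is the set of starting points $z$ for which there is an open path from $(z,0)$ to $(y,t)$ in the graphical construction; by time-reversal symmetry, $\zeta$ is distributed as $\xi^y_t$. Since $\eta$ and $\eta^x$ differ only at $x$, a short case analysis on $\eta(x)$ gives
\[ \{\xi^\eta_t(y) \neq \xi^{\eta^x}_t(y)\} = \{x \in \zeta\} \cap \{\eta \cap (\zeta \setminus \{x\}) = \emptyset\}. \]

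Taking $\eta$ independent of the graphical construction and applying Fubini,
\[ \int \bP\!\left(\xi^\eta_t(y)\neq\xi^{\eta^x}_t(y)\right)\mu(d\eta) = \bE\!\left[\ind_{x \in \zeta}\, \mu\bigl(\eta \cap (\zeta \setminus \{x\}) = \emptyset\bigr)\right]. \]
The inner $\mu$-probability equals $\mu(\zeta\setminus\{x\}\text{ is not infected})$, which by~\eqref{eq:mu-duality} and Lemma~\ref{lemma:liggett}a) is at most $c_1 e^{-c_2(|\zeta|-1)}$. Hence
\[ \int \bP\!\left(\xi^\eta_t(y)\neq\xi^{\eta^x}_t(y)\right)\mu(d\eta) \leq c_3\, \bE\bigl[\ind_{x \in \zeta}\, e^{-c_2 |\zeta|}\bigr]. \]

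The spatial bound $e^{-c_2(\|x-y\|_\infty - c_3 t)}$ follows by dropping the $e^{-c_2|\zeta|}$ factor and invoking the classical linear-speed estimate $\bP(x \in \zeta) = \bP(x \in \xi^y_t) \leq c_1 e^{-c_2(\|x-y\|_\infty - c_3 t)}$, which reflects that infection cannot outrun a fixed speed except with exponentially small probability. For the temporal bound $e^{-\alpha t}$, we split on survival of $\zeta\stackrel{d}{=}\xi^y$: on the extinction event, $\{x\in\zeta\}$ forces $\zeta\neq\emptyset$, which has probability $\leq c_1 e^{-c_2 t}$ by Lemma~\ref{lemma:liggett}b); on survival, Lemma~\ref{lemma:ldp-survival} ensures $|\zeta|\geq c_4 t$ with probability $1-c_5 e^{-c_6 t}$, so the factor $e^{-c_2|\zeta|}$ itself becomes exponentially small in $t$. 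The main obstacle, and also the key observation, is the precise pathwise identification of the discrepancy event via the dual process; without it one is left with only the crude bound $\bP(x\in\xi^y_t)$, which does not decay in $t$ alone, and neither half of the minimum would follow.
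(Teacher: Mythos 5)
Your proof is correct, and the two key ingredients are the same as the paper's: the first-passage-percolation linear-speed estimate for the spatial bound, and the combination of self-duality, the extinction-probability bound \eqref{eq:mu-duality}/Lemma~\ref{lemma:liggett}, and the growth estimate Lemma~\ref{lemma:ldp-survival} for the temporal bound. The one structural difference is in how the spatial bound is reached. The paper's proof uses a second-class-particle coupling argument, observing that new discrepancies can only be created by infection across an existing discrepancy, which gives the pathwise containment $\xi^\eta_t\,\Delta\,\xi^{\eta^x}_t\subset\xi^x_t$ and hence $\bP\bigl(\xi^\eta_t(y)\neq\xi^{\eta^x}_t(y)\bigr)\leq\bP(y\in\xi^x_t)$, uniformly in $\eta$; duality is invoked only for the temporal bound. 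You instead derive both halves from the single pathwise dual identity $\{\xi^\eta_t(y)\neq\xi^{\eta^x}_t(y)\}=\{x\in\zeta\}\cap\{\eta\cap(\zeta\setminus\{x\})=\emptyset\}$, which is a slightly cleaner and more unified presentation: dropping the second factor gives the spatial bound (using $\bP(x\in\zeta)=\bP(x\in\xi^y_t)=\bP(y\in\xi^x_t)$ by lattice symmetry), while applying \eqref{eq:mu-duality} to the second factor gives the extinction-probability factor $e^{-c_2|\zeta|}$ for the temporal bound. Both routes arrive at exactly the same estimates; the coupling route has the small advantage of yielding a bound that is uniform in $\eta$ rather than only after integrating against $\mu$, though the lemma as stated needs only the integrated version.
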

\begin{proof}
The bound for $\norm{x-y}_\infty$ large relies on a comparison with first passage percolation. Consider first passage percolation with independent exponentially distributed edge weights with parameter $\lambda$. Let $T(x,y)$ be the first passage percolation travel distance between $x$ and $y$, i.e., the minimal weight of paths between $x$ and $y$. Let $B_t =\{z\in\bZ^d : T(x,z)\leq t\}$ be the ball of radius $t$ with respect to the distance $T$. By using the graphical construction of the contact process we can see that $B_t$ has the same distribution as the contact process at time $t$ started in $x$ if there are no recovery events. Hence $B_t$ stochastically dominates $\xi_t^x$. Note that new discrepancies between $\xi_t^{\eta^x}$ and $\xi^\eta_t$ can only arise via infection events from a site in $\xi^\eta_t \Delta\xi^{\eta^x}_t$, where the $\Delta$ is the symmetric difference. Hence $\xi^\eta_t \Delta\xi^{\eta^x}_t \subset \xi^x_t$, and
\begin{align}\label{eq:secondclass-a}
 \bP\left(\xi^\eta_t(y)\neq\xi^{\eta^x}_t(y)\right) &=  \bP\left(y\in \xi^\eta_t \Delta\xi^{\eta^x}_t \right) 
\leq \bP\left(y\in \xi^x_t\right) \leq \bP(T(x,y)\leq t).
\end{align}
From the theory of first passage percolation(see \cite{KESTEN:86}, Theorems 3.10, 3.11) we use the following fact:
There exist positive constants $c_2, c_3, c_4$ such that for all $x\in \bZ^d$ with $\norm{x}_\infty>c_3 t$,
\begin{align}
\bP(T(0,x) \leq t) \leq c_4 e^{-c_2 \norm{x}_\infty}. \label{eq:fpp1}	
\end{align}
From this and \eqref{eq:secondclass-a} follows 
\begin{align}
\int \bP\left(\xi^\eta_t(y)\neq\xi^{\eta^x}_t(y)\right)\,\mu(d\eta) \leq c_4e^{-c_2(\norm{x-y}_\infty-c_3t)}.
\end{align}

To prove 
\begin{align}
\int \bP\left(\xi^\eta_t(y)\neq\xi^{\eta^x}_t(y)\right)\,\mu(d\eta) \leq c_1e^{-\alpha t}
\end{align}
we use the self duality of the contact process. Since $\xi_t^\eta(y)=1$ corresponds to the event $\xi_t^y\cap \eta \neq \emptyset$, there is a difference in $y$ if and only if either $\xi_t^y\cap \eta$ or $\xi_t^y\cap \eta^x$ is empty. Hence
\begin{align}
 \bP\left(\xi^\eta_t(y)\neq\xi^{\eta^x}_t(y)\right) 
&= \bP\left(\xi^y_t \cap (\eta\cup\{x\}) = \{x\}\right)	\\
&= \bP\left((\xi^y_t\bs\{x\}) \cap \eta = \emptyset, x\in\xi^y_t\right)	\\
&\leq \bP\left((\xi^y_t\bs\{x\}) \cap \eta = \emptyset, \xi^y_t\neq\emptyset\right).
\end{align}
To estimate this probability, we can distinguish between survival or extinction of $\xi^y$ to obtain the upper bound
\begin{align}\label{eq:extinctionsurvival}
\bP\left((\xi^y_t\bs\{x\}) \cap \eta = \emptyset , \text{$\xi^y$ survives} \right) + \bP\left(\xi^y_t\neq\emptyset , \text{$\xi^y$ becomes extinct} \right).
\end{align}
We will treat the first and second term separately. If the initial infection at $y$ survives, Lemma \ref{lemma:ldp-survival} shows that typically $|\xi^y_t|$ is at least of order $t$. For the intersection $(\xi^y_t\bs\{x\}) \cap \eta$ to be empty the $\eta$ must be in some sense exceptional. Consider an arbitrary finite set $A\subset\bZ^d$. Since $\eta$ is $\mu$-distributed we can use self-duality and Lemma \ref{lemma:liggett} to get  constants $c_5,c_6>0$ independent of $A$ so that 
\begin{align}\label{eq:extinction}
\mu\left(\eta\cap A=\emptyset\right) = \bP(\xi^A \text{ becomes extinct}) \leq c_5e^{-c_6\abs{A}}
\end{align}
Conditioning on survival and integrating with respect to $\mu$ we obtain 
\begin{align}
&\int \bP\left((\xi^y_t\bs\{x\}) \cap \eta = \emptyset \;\middle|\; \xi^y \text{ survives}\right)\,\mu(d\eta) \\
&\quad\leq \bE \left(c_5 e^{-c_6(\abs{\xi_t^y}-1)}\;\middle|\;\xi^y \text{ survives}\right).
\end{align}
By Lemma \ref{lemma:ldp-survival} this is exponentially small.

To finish the proof we show that the second term in \eqref{eq:extinctionsurvival} is also exponentially small. We use the fact that an extinction happens early, or more precisely Lemma \ref{lemma:liggett}, from which we get
\begin{align}
 \bP(\xi^y_t \neq \emptyset \;|\; \xi^y \text{ becomes extinct}) \leq c_5e^{-c_6t}. 
\end{align}
\end{proof}
We have just proven that the probability for the event that an initial discrepancy at $x$ causes a difference at site $y$ and time $t$ decays exponentially fast with respect to the spatial and temporal distance. We use this to prove that the second moment of the total number of discrepancies at time $t$ is small.
\begin{lemma}\label{lemma:cluster}
There is a constant $c>0$ so that
\begin{align}
\int \bE \abs{\xi_t^{\eta^x}\Delta \xi_t^\eta}^2\mu(d\eta)\leq c(t+1)^{3d}e^{-\alpha t}.
\end{align}
\end{lemma}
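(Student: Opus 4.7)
The plan is to reduce the two-point event underlying the second moment to the one-point estimate of Lemma \ref{lemma:discrepancy} via a pair of Cauchy--Schwarz applications. Writing $D_t:=\xi_t^{\eta^x}\Delta\xi_t^\eta$, we expand
\begin{align}
\abs{D_t}^2 = \sum_{y,z\in\bZ^d} \ind_{\{y\in D_t\}}\ind_{\{z\in D_t\}}.
\end{align}
For each $\eta$ and each pair $(y,z)$ the elementary inequality $\bP(A\cap B)\leq \sqrt{\bP(A)\bP(B)}$ gives
\begin{align}
\bP(y\in D_t,\, z\in D_t) \leq \sqrt{\bP(y\in D_t)\,\bP(z\in D_t)},
\end{align}
and a further Cauchy--Schwarz in the $\mu$-integral yields, with $p(y):=\int \bP(y\in D_t)\,\mu(d\eta)$,
\begin{align}
\int \bP(y\in D_t,\, z\in D_t)\,\mu(d\eta) \leq \sqrt{p(y)\,p(z)}.
\end{align}
Summing over pairs decouples the two sites:
\begin{align}
\int \bE\abs{D_t}^2\,\mu(d\eta) \leq \left(\sum_{y\in \bZ^d}\sqrt{p(y)}\right)^2.
\end{align}

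It then remains to bound $\sum_y\sqrt{p(y)}$ using Lemma \ref{lemma:discrepancy}, which gives $p(y)\leq c_1\min(e^{-c_2(\norm{x-y}_\infty-c_3t)},\,e^{-\alpha t})$. I would introduce a cutoff radius $R$ of order $t$, chosen (e.g.\ $R=(c_3+\alpha/c_2)\,t$) so that the spatial bound at $\norm{x-y}_\infty=R$ matches the temporal bound $e^{-\alpha t}$. Sites inside the box of radius $R$ around $x$ contribute at most $O(t^d)\cdot\sqrt{c_1}\,e^{-\alpha t/2}$, while sites outside are controlled by the geometrically decaying tail $\sum_{r>R} r^{d-1}e^{-c_2(r-c_3t)/2}=O(t^{d-1}e^{-\alpha t/2})$. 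Squaring produces $\int \bE\abs{D_t}^2\,\mu(d\eta)=O(t^{2d}e^{-\alpha t})$, which is majorised by $c(t+1)^{3d}e^{-\alpha t}$ with slack in the polynomial factor.

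The main technical obstacle is that Lemma \ref{lemma:discrepancy} is a one-site estimate whereas $\abs{D_t}^2$ involves two-site events. The Cauchy--Schwarz device above handles this but costs a square root in the probabilities, so only $e^{-\alpha t/2}$ remains per site; the full rate $e^{-\alpha t}$ is recovered only after squaring the decoupled sum. Matching the spatial and temporal regimes of Lemma \ref{lemma:discrepancy} through the linear-in-$t$ cutoff $R$ is essential: a smaller cutoff forfeits the rate $\alpha$, while a larger one inflates the polynomial prefactor beyond what is needed.
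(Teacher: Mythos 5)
Your proof is correct, and it takes a genuinely different route from the paper's. The paper bounds $\abs{\xi_t^{\eta^x}\Delta\xi_t^\eta}^2$ by the volume of the smallest cube (centred at $x$) containing the discrepancy set; minimality forces a discrepancy on the boundary sphere of radius $R$, and a union bound over the $O(R^{d-1})$ boundary sites, combined with Lemma~\ref{lemma:discrepancy}, yields a sum over $R$ that is then split at $R_t=\lfloor(\alpha/c_2+c_3)t+1\rfloor$. You instead expand $\abs{D_t}^2$ as a double sum and apply Cauchy--Schwarz twice, first pointwise in $\eta$ (on the pair of indicator events) and then in $L^2(\mu)$, to decouple the two-point correlation into $\sqrt{p(y)\,p(z)}$; the double sum then factorizes exactly as $\bigl(\sum_y\sqrt{p(y)}\bigr)^2$, and the one-point input Lemma~\ref{lemma:discrepancy} together with the same linear-in-$t$ cutoff finishes the bound. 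Both arguments reduce to the same one-site estimate and the same cutoff scale; the difference is purely in how the two-site event is handled. A minor bonus of your decoupling: it produces $O(t^{2d})$ rather than $O(t^{3d})$ in the polynomial prefactor, which is more than enough slack to match the stated $(t+1)^{3d}$. The paper's geometric approach avoids the loss of a factor $1/2$ in the exponent that Cauchy--Schwarz introduces per site, but as you correctly note, squaring the decoupled sum recovers the full rate $e^{-\alpha t}$, so the two approaches are equally effective here.
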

\begin{proof}
Consider the smallest cube centred at the origin which contains $\xi_t^{\eta^x}\Delta \xi_t^\eta$. If its radius is $\widetilde R$, then clearly $\abs{\xi_t^{\eta^x}\Delta \xi_t^\eta}^2 \leq (2\widetilde{R}+1)^{2d}$. By minimality of the cube there is an element of $\xi_t^{\eta^x}\Delta \xi_t^\eta$ on its boundary. Hence,
\begin{align}
&\int \bE \abs{\xi_t^{\eta^x}\Delta \xi_t^\eta}^2\mu(d\eta) \\
&\quad\leq \sum_{R=0}^\infty (2R+1)^{2d} \int\bP\left(\exists\, y:\norm{x-y}_\infty =R, \xi_t^{\eta^x}(y)\neq\xi_t^\eta(y) \right)\,\mu(d\eta).	 \label{eq:cluster-1}
\end{align}
By Lemma \ref{lemma:discrepancy}, using the same constants,
\begin{align}
&\int \bP\left(\exists y:\norm{x-y}_\infty =R, \xi_t^{\eta^x}(y)\neq\xi_t^\eta(y) \right) \mu(d\eta) \\
&\quad\leq \sum_{y:\norm{x-y}_\infty =R} \int \bP\left(\xi_t^{\eta^x}(y)\neq\xi_t^\eta(y) \right) \mu(d\eta) 	\\
&\quad\leq 2d(2R+1)^{d-1} c_1\min\left(e^{-c_2(R-c_3t)},e^{-\alpha t}\right).  
\end{align}
Hence, by splitting the sum at some value $R_t$ to be determined later,
\begin{align}
 \eqref{eq:cluster-1}\leq &(R_t+1)(2R_t+1)^{2d}2d(2R_t+1)^{d-1}c_1e^{-\alpha t}	\\
 &+ \sum_{R=1}^\infty (2R+2R_t+1)^{2d}2d(2R+2R_t+1)^{d-1}c_1e^{-c_2(R+R_t-c_3t)}	\\
\leq& 2dc_1 (2R_t+1)^{3d}e^{-\alpha t}
+ 2dc_1 e^{-c_2R_t+c_2c_3 t}\sum_{R=1}^\infty (2R+2R_t+1)^{3d-1}e^{-c_2 R}.\label{eq:cluster-2}
\end{align}
Choosing $R_t=\lfloor (\alpha/c_2+c_3)t+1\rfloor$ and using the fact that $(2R+1+2R_t)^{3d-1}\leq 2^{3d-1}(2R+1)^{3d-1}(2R_t)^{3d-1}$, we can find a suitable constant $c>0$ so that
\begin{align}
 \eqref{eq:cluster-2} &\leq c (t+1)^{3d}e^{-\alpha t}. \qedhere
\end{align} 
\end{proof}
\begin{proof}[Proof of Proposition \ref{prop:var-decay}]
By using the fact that $\int L [P_tf \overline{P_tf}] \,d\mu = 0$,
\begin{align}
 \frac{d}{dt}\Var_\mu(P_t f) = -\int \left(L\left[(P_tf-P_tf(\eta))\overline{(P_tf-P_tf(\eta))}\right]\right)(\eta)\,\mu(d\eta).
\end{align}
Together with $\lim_{t\to\infty}\Var_\mu(P_tf)=0$, this gives us the variance estimate
\begin{align}
\Var_\mu(P_t f) &= \int_t^\infty \int L\left[(P_s f-P_sf(\eta))\overline{(P_s f-P_sf(\eta))}\right](\eta)\,\mu(d\eta)ds	\\
 &\leq (2d\lambda+1) \int_t^\infty  \int \sum_{x\in\bZ^d} \abs{P_sf(\eta^x) -P_sf(\eta)}^2\,\mu(d\eta)ds.	\label{eq:var-2}
\end{align}
Estimating the inner term by considering the maximal influences $\delta_f$ (remember \eqref{eq:deltas}) at the discrepancies, applying Jensen's inequality and then using translation invariance of the contact process,
\begin{align}
\abs{P_sf(\eta^x) -P_sf(\eta)}^2 
&\leq \left(\bE\sum_{y\in\bZ^d}\ind_{\xi^\eta_s(y)\neq\xi_s^{\eta^x}(y)}\delta_f(y) \right)^2	\\
&\leq \bE \abs{\xi^\eta_s\Delta\xi^{\eta^x}}\sum_{y\in\bZ^d} \ind_{\xi^\eta_s(y)\neq\xi_s^{\eta^x}(y)}(\delta_f(y))^2	\\
&= \bE \abs{\xi^\eta_s\Delta\xi^{\eta^0}}\sum_{y\in\bZ^d} \ind_{\xi^\eta_s(y)\neq\xi_s^{\eta^0}(y)}(\delta_f(x+y))^2.
\end{align}
Hence
\begin{align}
 \eqref{eq:var-2} &\leq (2d\lambda+1) \int_t^\infty  \int \bE \abs{\xi^\eta_s\Delta\xi^{\eta^0}}^2\mu(d\eta)ds \sum_{x\in\bZ^d}\delta_f(x)^2.
\end{align}
The claim then follows by Lemma \ref{lemma:cluster}.
\end{proof}

\section{Contact process in a finite volume}\label{section:finite}
In this section we consider the contact process in a finite region, $\Lambda_N=\{-N,...,N\}^d$, with a boundary condition of all sites infected. Let $\Omega_N=\{\eta\in\Omega: \eta\equiv 1 \text{ off } \Lambda_N\}$. The generator of the contact process on $\Omega_N$ is given by
\begin{align}
 L_N f(\eta) = \sum_{x\in\Lambda_N}\eta(x)[f(\eta^x)-f(\eta)] + \sum_{x\in\Lambda_N}\sum_{y\sim x}\lambda \eta(y)(1-\eta(x))[f(\eta^x)-f(\eta)].
\end{align}
Let $P_{N,t}$ be the corresponding semi-group and $\mu_N$ the invariant measure on $\Omega_N$.

The corresponding Markov process $(\xi_{N,t})_{t\geq0}$ can be constructed from the same graphical construction as the infinite process on $\bZ^d$, simply by ignoring all recovery events outside of $\Lambda_N$. Therefore we write $(\xi_{N,t})_{t\geq0}$ also for the contact process generated by 
\begin{align}
 \widetilde{L}_N f(\eta) = \sum_{x\in\Lambda_N}\eta(x)[f(\eta^x)-f(\eta)] + \sum_{x\in\bZ^d}\sum_{y\sim x}\lambda \eta(y)(1-\eta(x))[f(\eta^x)-f(\eta)],
\end{align}
which coincides with the finite contact process on $\Omega_N$.

The finite volume contact process $\xi_N$ has a similar dual relation as the infinite contact process. Since the finite contact process differs from the infinite one only by the fact that infections outside $\Lambda_N$ survive forever we can relate the two. For $\eta\in\Omega_N,A\subset \Lambda_N$,
\begin{align}\label{eq:finite-duality}
 \bP(\xi^\eta_{N,t}\cap A \neq \emptyset) &= \bP(\xi^A_{N,t} \cap \eta \neq \emptyset ) = \\
&=\bP\left(\xi_t^A \cap \eta \neq \emptyset \text{ or }\exists\,s\in[0,t]: \xi_s^A \cap \Lambda_N^c \neq \emptyset\right) .
\end{align}
Note how on the right hand side the contact process on the infinite lattice with initial configuration $A\in\Omega$ is used. The graphical construction also allows for a natural coupling of $\xi$ and $\xi_N$.

Initially we will reprove the results from Section \ref{section:Zd} in the finite setting. Mostly the proves do not change much from the infinite case. Unless otherwise mentioned, constants are the same as in the corresponding statements in Section \ref{section:Zd}.
\begin{lemma}\label{lemma:finite-discrepancy}
 There are constants $c_1,c_2,c_3,\alpha>$ so that for all $N>0$, $\eta\in\Omega_N$ and $x,y\in\Lambda_N$
\begin{align}
\int \bP\left(\xi^\eta_{N,t}(y)\neq\xi^{\eta^x}_{N,t}(y)\right)\,\mu(d\eta) \leq c_1\min\left(e^{-c_2(\norm{x-y}_\infty-c_3t)},e^{-\alpha t}\right).
\end{align}
\end{lemma}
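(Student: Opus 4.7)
The proof will parallel the infinite-volume Lemma~\ref{lemma:discrepancy}. The central observation is that both bounds in the statement can be transferred from $\bZ^d$ to the finite box with only minor modifications of the graphical and duality arguments, using no new probabilistic input.

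For the spatial bound $c_1 e^{-c_2(\norm{x-y}_\infty - c_3 t)}$, I would reuse the coupling argument from Lemma~\ref{lemma:discrepancy} verbatim. Pointwise in the graphical construction, the discrepancy set $\xi^\eta_{N,t}\Delta\xi^{\eta^x}_{N,t}$ starts as $\{x\}$ at time $0$ and can grow only via infection arrows. Since the finite and infinite processes share the same infection Poisson processes $N^{xy}$, this set is contained in the forward infection cluster from $x$, which is dominated by the first-passage-percolation ball around $x$. The absence of recoveries outside $\Lambda_N$ is irrelevant here because the bound concerns only forward infection propagation. Thus \eqref{eq:fpp1} and \eqref{eq:secondclass-a} go through unchanged.

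For the temporal bound $c_1 e^{-\alpha t}$ I would exploit the modified duality \eqref{eq:finite-duality}. Reading that duality pointwise in the graphical construction, $\xi^\eta_{N,t}(y) = 1$ iff a backward infection path from $(y, t)$ either reaches some $z\in\eta$ at time $0$ or exits $\Lambda_N$ at some intermediate time. Consequently, a discrepancy $\xi^\eta_{N,t}(y) \neq \xi^{\eta^x}_{N,t}(y)$ forces the backward path from $y$ to reach $\{x\}$ at time $0$, to miss $\eta\setminus\{x\}$ entirely, AND to remain inside $\Lambda_N$ throughout $[0,t]$. Discarding the last, finite-volume-specific, constraint and invoking self-duality in law to identify the backward path from $y$ with $\xi^y_t$, I obtain
\[
  \bP\bigl(\xi^\eta_{N,t}(y)\neq\xi^{\eta^x}_{N,t}(y)\bigr) \leq \bP\bigl((\xi^y_t\setminus\{x\})\cap\eta=\emptyset,\; x\in\xi^y_t\bigr).
\]
The right-hand side no longer depends on $N$, so integrating against $\mu(d\eta)$ and splitting into the events $\{\xi^y\text{ survives}\}$ and $\{\xi^y\text{ becomes extinct}\}$ reduces the task to the estimates \eqref{eq:extinction}, Lemma~\ref{lemma:ldp-survival}, and Lemma~\ref{lemma:liggett}(b), exactly as in the proof of Lemma~\ref{lemma:discrepancy}.

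The one step that deserves care, and which I view as the main (mild) obstacle, is the precise pointwise description of $\xi^\eta_{N,t}(y)$ in terms of backward infection paths so that the restriction ``backward path stays inside $\Lambda_N$'' is genuinely an added constraint that can be discarded to yield a valid upper bound, rather than something that would require a more involved accounting of boundary-induced infections. Once this is verified the constants $c_1,c_2,c_3,\alpha$ can be taken to coincide with those of Lemma~\ref{lemma:discrepancy}, justifying the ``same constants'' convention announced in the text.
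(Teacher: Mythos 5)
Your proposal is correct and follows essentially the same route as the paper: the spatial bound is inherited from the coupling $\xi^{\eta^x}_{N,t}\Delta\xi^\eta_{N,t}\subset\xi^x_t$, and the temporal bound is obtained by applying the finite-volume duality \eqref{eq:finite-duality}, observing that a discrepancy forces the dual path $\xi^y_s$ to stay in $\Lambda_N$ for all $s\in[0,t]$, and discarding that extra constraint to reduce to the infinite-volume estimate. The ``pointwise backward-path'' concern you flag is already taken care of by \eqref{eq:finite-duality}, which the paper applies directly to get the equality $\bP(\xi^{\eta^x}_{N,t}(y)\neq\xi^\eta_{N,t}(y))=\bP(\xi^y_t\cap(\eta\cup\{x\})=\{x\},\ \forall s\in[0,t]:\xi^y_s\cap\Lambda_N^c=\emptyset)$ before dropping the boundary condition.
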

\begin{proof}
 The part for large $\norm{x-y}_\infty$ is the same as in Lemma \ref{lemma:discrepancy} by the fact that $\xi_{N,t}^{\eta^x}\Delta \xi_{N,t}^\eta \subset \xi_t^x$.

The large $t$ estimate uses \eqref{eq:finite-duality}, from which it follows that
\begin{align}
 \bP\left(\xi^{\eta^x}_{N,t}(y) \neq \xi^\eta_{N,t}(y) \right) &= \bP\left(\xi_t^y \cap (\eta\cup \{x\}) = \{x\}, \forall\,s\in[0,t]: \xi_s^y \cap \Lambda_N^c = \emptyset\right) \\
&\leq \bP\left(\xi_t^y \cap (\eta\cup \{x\}) = \{x\} \right).
\end{align}
From here the proof continues as in the infinite volume case.
\end{proof}

\begin{lemma}\label{lemma:finite-cluster}
There is a constant $c>0$ so that for all $N>0$, 
\begin{align}
\int \bE \abs{\xi_{N,t}^{\eta^x}\Delta \xi_{N,t}^\eta}^2\mu(d\eta)\leq c(t+1)^{3d}e^{-\alpha t}.
\end{align}
\end{lemma}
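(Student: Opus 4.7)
The proof plan is to repeat the argument of Lemma \ref{lemma:cluster} essentially verbatim, substituting Lemma \ref{lemma:finite-discrepancy} for Lemma \ref{lemma:discrepancy}. The crucial observation is that Lemma \ref{lemma:finite-discrepancy} provides the same pointwise discrepancy bound as its infinite-volume counterpart with constants that are uniform in $N$, so the whole estimate inherits the $N$-independence claimed here.

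Concretely, I would let $\widetilde R$ denote the radius of the smallest $\ell^\infty$-cube centred at $x$ containing $\xi_{N,t}^{\eta^x}\Delta \xi_{N,t}^\eta$. Since $\xi_{N,t}^{\eta^x}\Delta \xi_{N,t}^\eta\subset\Lambda_N$ this is well defined. By minimality some $y$ with $\norm{x-y}_\infty=\widetilde R$ lies in the symmetric difference, while crudely $|\xi_{N,t}^{\eta^x}\Delta \xi_{N,t}^\eta|^2\leq (2\widetilde R+1)^{2d}$. Summing over the possible values of $\widetilde R$ yields
\begin{align}
\int \bE \abs{\xi_{N,t}^{\eta^x}\Delta \xi_{N,t}^\eta}^2\,\mu(d\eta) \leq \sum_{R=0}^\infty (2R+1)^{2d}\sum_{y:\norm{x-y}_\infty=R}\int \bP\bigl(\xi_{N,t}^{\eta^x}(y)\neq\xi_{N,t}^\eta(y)\bigr)\,\mu(d\eta),
\end{align}
and Lemma \ref{lemma:finite-discrepancy}, combined with the cardinality bound $|\{y:\norm{x-y}_\infty=R\}|\leq 2d(2R+1)^{d-1}$, recovers exactly the $R$-indexed sum appearing in the proof of Lemma \ref{lemma:cluster}.

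From there I would split the sum at $R_t=\lfloor(\alpha/c_2+c_3)t+1\rfloor$, using the $e^{-\alpha t}$ branch of Lemma \ref{lemma:finite-discrepancy} for $R\leq R_t$ and the spatial decay $e^{-c_2(R-c_3 t)}$ for $R>R_t$. The algebra is identical to that in Lemma \ref{lemma:cluster} and produces the bound $c(t+1)^{3d}e^{-\alpha t}$ with a constant $c$ independent of $N$. There is no genuine obstacle at this stage: Lemma \ref{lemma:finite-discrepancy} has already absorbed the new difficulty introduced by the boundary condition (via the reduction $\xi_{N,t}^{\eta^x}\Delta\xi_{N,t}^\eta\subset\xi_t^x$ and the duality relation \eqref{eq:finite-duality}), and the counting-geometry of Lemma \ref{lemma:cluster} transfers verbatim.
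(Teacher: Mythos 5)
Your proposal is correct and follows exactly the paper's route: the paper's proof of Lemma \ref{lemma:finite-cluster} is simply the one-line remark that it is identical to Lemma \ref{lemma:cluster} after substituting Lemma \ref{lemma:finite-discrepancy} for Lemma \ref{lemma:discrepancy}. You have merely spelled out the verbatim transfer of the counting-and-splitting argument, which is accurate.
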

\begin{proof}
 The proof is identical to the one of Lemma \ref{lemma:cluster} by virtue of Lemma \ref{lemma:finite-discrepancy}.
\end{proof}

\begin{proposition}\label{prop:finite-var-decay}
\begin{align}
\Var_{\mu_N}(P_{N,t}f)\leq C \int_t^\infty (s+1)^{3d} e^{-\alpha s}\,ds \sum_{x\in\Lambda_N}\delta_f(x)^2.
\end{align}
\end{proposition}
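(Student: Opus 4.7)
The proof will follow the same scheme as Proposition \ref{prop:var-decay}, with adjustments for the finite volume. First, by $\mu_N$-invariance, $\int L_N g\,d\mu_N=0$ for any bounded $g$, so
\begin{align}
 \ddt\Var_{\mu_N}(P_{N,t}f) = -\int L_N\left[|P_{N,t}f-P_{N,t}f(\eta)|^2\right](\eta)\,\mu_N(d\eta).
\end{align}
Since the finite-box chain is irreducible, $\Var_{\mu_N}(P_{N,s}f)\to 0$ as $s\to\infty$. Bounding the flip rates in $L_N$ by $1+2d\lambda$ and integrating from $t$ to $\infty$ gives
\begin{align}
\Var_{\mu_N}(P_{N,t}f) \leq (1+2d\lambda)\int_t^\infty\!\int \sum_{x\in\Lambda_N}|P_{N,s}f(\eta^x)-P_{N,s}f(\eta)|^2\,\mu_N(d\eta)\,ds,
\end{align}
and the graphical coupling plus Cauchy--Schwarz yield, exactly as in Proposition \ref{prop:var-decay},
\begin{align}
|P_{N,s}f(\eta^x)-P_{N,s}f(\eta)|^2 \leq \bE\left[|\cD^x|\sum_y \ind_{y\in\cD^x}\delta_f(y)^2\right],
\end{align}
where $\cD^x:=\xi_{N,s}^{\eta^x}\Delta\xi_{N,s}^\eta$.

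The main obstacle is that $\mu_N$ is not translation invariant, so the step of the infinite-case proof which absorbed $\sum_x\delta_f(x+y)^2$ into $\sum_z\delta_f(z)^2$ no longer applies. I would handle this by swapping the sums over $x$ and $y$ and applying Cauchy--Schwarz twice, once inside the expectation and once inside the $\mu_N$-integral:
\begin{align}
\int\bE\left[|\cD^x|\,\ind_{y\in\cD^x}\right]\mu_N(d\eta) \leq \sqrt{\int\bE|\cD^x|^2\,\mu_N(d\eta)}\cdot\sqrt{\int\bP(y\in\cD^x)\,\mu_N(d\eta)}.
\end{align}
Although Lemmas \ref{lemma:finite-discrepancy} and \ref{lemma:finite-cluster} are stated with $\mu$-integration, their proofs rely only on first passage percolation, self-duality, and the bound $\mu(\eta\cap A=\emptyset)\leq c_5 e^{-c_6|A|}$. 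The latter inequality transfers to $\mu_N$ because the contact process is attractive and $\mu_N$ stochastically dominates $\mu$; hence both lemmas remain valid with $\mu_N$ in place of $\mu$. This bounds the two square roots by $\sqrt{c(s+1)^{3d}e^{-\alpha s}}$ and $\sqrt{c_1\min(e^{-c_2(\norm{x-y}_\infty-c_3s)},e^{-\alpha s})}$, respectively.

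Summing the spatial factor over $x\in\Lambda_N$ for fixed $y$ is a geometric-type sum: the exponential spatial decay kicks in once $\norm{x-y}_\infty\gtrsim s$, while the ball of that radius contains $O(s^d)$ sites, giving a total of order $(s+1)^d e^{-\alpha s/2}$, uniformly in $y$ and $N$. Combined with the second-moment factor this produces a bound of order $(s+1)^{5d/2}e^{-\alpha s}\leq(s+1)^{3d}e^{-\alpha s}$ (for $d\geq 1$). Multiplying by $\delta_f(y)^2$, summing over $y$, and integrating over $s\in[t,\infty)$ then gives the desired estimate. The difficulty I expect to be the trickiest is exactly this Cauchy--Schwarz loss replacing the translation-invariance step; it is just barely compensated by the sharp spatial decay in Lemma \ref{lemma:finite-discrepancy}, which is strong enough that the sum over starting sites $x$ converges uniformly in $y$ and $N$.
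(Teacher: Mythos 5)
Your instinct that translation invariance is the sticking point is right on target. The paper dismisses this proposition with ``the proof is identical,'' but the step in Proposition~\ref{prop:var-decay} that shifts the discrepancy process from $x$ to the origin and absorbs $\sum_x\delta_f(x+y)^2$ into $\sum_z\delta_f(z)^2$ genuinely uses translation invariance of $\mu$ (and of the dynamics), neither of which holds in the box; moreover, Lemmas~\ref{lemma:finite-discrepancy} and~\ref{lemma:finite-cluster} as stated integrate against $\mu$, not $\mu_N$. Your workaround is correct. The two transfers you rely on do hold: the first-passage bound $\bP(y\in\xi^{\eta^x}_{N,s}\Delta\xi^\eta_{N,s})\leq\bP(T(x,y)\leq s)$ is pathwise and uniform in $\eta$; and since $\{\eta\cap A=\emptyset\}$ is a decreasing event and $\mu\preceq\mu_N$ (couple both dynamics from the all-ones state via the graphical construction, the finite process merely suppressing recoveries off $\Lambda_N$), one gets $\mu_N(\eta\cap A=\emptyset)\leq\mu(\eta\cap A=\emptyset)\leq c_5e^{-c_6\abs{A}}$. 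These are exactly the only places where the proofs of Lemmas~\ref{lemma:discrepancy} and~\ref{lemma:cluster} touch the invariant measure, so both lemmas hold with $\mu_N$ in place of $\mu$. After that, your Cauchy--Schwarz across $\bP\times\mu_N$ and the split of the $x$-sum near $R_s\sim s$ give $(s+1)^{5d/2}e^{-\alpha s}$, which suffices since $5d/2\leq 3d$. As a minor refinement, you can avoid the square-root loss by expanding $\bE[\abs{\cD^x}\ind_{y\in\cD^x}]=\sum_z\bP(y,z\in\cD^x)$ and bounding each term by the minimum of the temporal bound $e^{-\alpha s}$ and the two spatial bounds in $\norm{x-y}_\infty$ and $\norm{x-z}_\infty$; summing over $x$ and $z$ then yields $(s+1)^{2d}e^{-\alpha s}$, but your version as written is already adequate.
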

The proof is identical to Proposition \ref{prop:var-decay}.

\begin{proposition}\label{prop:finite-gap}
 The generator $L_N$ has a spectral gap of at least $\alpha/2$.
\end{proposition}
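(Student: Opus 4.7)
The plan is to derive the spectral gap of $L_N$ on $L^2_0(\mu_N) := \{ f \in L^2(\mu_N) : \int f\,d\mu_N = 0\}$ from the variance-decay estimate of Proposition \ref{prop:finite-var-decay} via a Gelfand spectral-radius argument, crucially exploiting the fact that $\Omega_N$ is a finite set so that $L^2(\mu_N)$ is finite-dimensional. The goal is to show that the spectral radius of the semigroup satisfies $\rho(P_{N,t}|_{L^2_0}) \leq e^{-\alpha t/2}$, whence by finite-dimensional spectral mapping $\sup\{\Re z : z\in\sigma(L_N|_{L^2_0})\} \leq -\alpha/2$.

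First I would bound the integral in Proposition \ref{prop:finite-var-decay} by $\int_t^\infty (s+1)^{3d}e^{-\alpha s}\,ds \leq C_1 (t+1)^{3d}e^{-\alpha t}$ to rewrite the estimate as
\begin{align}
\Var_{\mu_N}(P_{N,t}f) \leq C_1 (t+1)^{3d} e^{-\alpha t} \sum_{x\in\Lambda_N}\delta_f(x)^2.
\end{align}
Since $\Omega_N$ is a finite set, there exists a constant $K_N < \infty$ (depending on $N$) such that $\sum_{x\in\Lambda_N}\delta_f(x)^2 \leq K_N \Var_{\mu_N}(f)$ for every centred $f$; this follows from $\delta_f(x) \leq 2\supnorm{f}$ together with the equivalence of norms on the finite-dimensional space $L^2_0(\mu_N)$. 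Combining the two bounds yields the operator-norm estimate
\begin{align}
\norm{P_{N,t}}_{L^2_0 \to L^2_0}^2 \leq C_1 K_N (t+1)^{3d}e^{-\alpha t}.
\end{align}
Applying Gelfand's formula $\rho(P_{N,t}|_{L^2_0}) = \liml_{k\to\infty}\norm{P_{N,kt}}_{L^2_0\to L^2_0}^{1/k}$, both the $N$-dependent constant $K_N$ and the polynomial factor are absorbed by the $k$-th root as $k \to \infty$, leaving $\rho(P_{N,t}|_{L^2_0}) \leq e^{-\alpha t/2}$, which by the observation above is equivalent to the claimed spectral gap.

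The main obstacle is the mismatch between the bounded-differences semi-norm $\sum_x\delta_f(x)^2$ on the right-hand side of Proposition \ref{prop:finite-var-decay} and the $L^2$-variance on the left: since the estimate is not of Poincar\'e type, one cannot extract a spectral gap by simple iteration of the variance bound. Finite-dimensionality allows the two norms to be compared at the cost of an unsightly constant $K_N$ depending on $N$, and the Gelfand step is exactly what renders this constant irrelevant in the limit. The crucial feature is that the spectral gap $\alpha/2$ produced by this argument does \emph{not} depend on $N$, which will be essential for the subsequent passage back to the infinite lattice in Section \ref{section:gap}.
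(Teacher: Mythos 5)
Your argument is correct, and it shares the core insight with the paper's proof: finiteness of $\Omega_N$ rescues a spectral-gap statement despite the mismatch between the variance on the left of Proposition \ref{prop:finite-var-decay} and the sum $\sum_x\delta_f(x)^2$ on the right, and the $t\to\infty$ asymptotics absorb both the polynomial prefactor and any $N$-dependent constant. The mechanics differ, however. The paper takes a more direct route: since $-L_N$ is a finite matrix whose eigenvalue $0$ is simple (irreducibility), every nonzero eigenvalue $\lambda$ admits a genuine eigenfunction $f$, for which $\Var_{\mu_N}(P_{N,t}f)=e^{-2\Re(\lambda)t}\Var_{\mu_N}(f)$ holds \emph{exactly}; comparing this identity with Proposition \ref{prop:finite-var-decay} as $t\to\infty$ immediately forces $\Re(\lambda)\geq\alpha/2$, with no norm-equivalence constant $K_N$ and no Gelfand formula. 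Your version first establishes the operator-norm bound $\norm{P_{N,t}}_{L^2_0\to L^2_0}^2\leq C_1K_N(t+1)^{3d}e^{-\alpha t}$ via finite-dimensional norm equivalence, and then lets Gelfand's formula strip away $K_N$ and the polynomial factor — effectively the same limit $t\to\infty$, packaged through spectral-radius machinery. Both arguments need irreducibility of $\xi_N$: the paper invokes it to make $0$ a simple eigenvalue, while you need it (implicitly) to guarantee that $\mu_N$ has full support on $\Omega_N$, so that $\Var_{\mu_N}(f)=0$ forces $f$ to be constant and $\Var_{\mu_N}(\cdot)^{1/2}$ is a bona fide norm on $L^2_0(\mu_N)$; that point should be stated explicitly, since it is exactly what makes the comparison $\sum_x\delta_f(x)^2\leq K_N\Var_{\mu_N}(f)$ legitimate. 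The intermediate observation $\delta_f(x)\leq 2\supnorm{f}$ is superfluous for this: both $\bigl(\sum_x\delta_f(x)^2\bigr)^{1/2}$ and $\Var_{\mu_N}(f)^{1/2}$ vanish exactly on constants, so equivalence of norms on the finite-dimensional quotient does all the work. In short, your proof is valid and conceptually the same, but needs heavier abstract apparatus where the paper's direct eigenpair argument settles the matter in two lines.
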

\begin{proof}
 Since $\Omega_N$ is finite $-L_N$ can be written as a finite matrix. Its spectrum consists of finitely many eigenvalues. By irreducibility the eigenvalue 0 has multiplicity 1, with the constant functions as eigenfunctions. Consider an eigenvalue $\lambda\neq0$ with eigenfunction $f:\Omega_N\to\bC$. Since $P_{N,t}f=e^{-\lambda t}f$ we have
\begin{align}
 \Var_{\mu_N}(P_{N,t}f) = e^{-2\Re(\lambda)t}\Var_{\mu_N}(f),
\end{align}
and by Proposition \ref{prop:finite-var-decay},
\begin{align}
 e^{-2\Re(\lambda)t}\Var_{\mu_N}(f) \leq C \int_t^\infty (s+1)^{3d} e^{-\alpha s}\,ds \sum_{x\in\Lambda_N}\delta_f(x)^2
\end{align}
holds for all $t\geq0$. This in turn implies $\Re(\lambda)\geq \alpha/2$.
\end{proof}

\section{Spectral gap on the infinite lattice}\label{section:gap}
What remains to do is to extend the spectral gap from the finite system to the infinite lattice. Since the estimate of the gap is uniform in $N$ this is straight forward, and consists of showing that $\mu_N\to\mu$ and $P_{N,t}\to P_t$ in a suitable sense. The first step is to show that inside a large but fixed box $\Lambda_L$ the measures $\mu_N$ converge to $\mu$.

\begin{lemma}\label{lemma:total-variation}
 Let $\mu|_{\Lambda_L}$ and $\mu_N|_{\Lambda_L}$ be the restrictions of the measures to $\{0,1\}^{\Lambda_L}$. For any $L>0$,
\begin{align}
 \lim_{N\to\infty} \norm{\mu|_{\Lambda_L} - \mu_N|_{\Lambda_L}}_{TV} =0,
\end{align}
where the norm is the total variation distance.
\end{lemma}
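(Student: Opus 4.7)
The plan is to reduce total variation convergence on the finite space $\{0,1\}^{\Lambda_L}$ to the convergence $\mu_N(A \text{ not infected}) \to \mu(A \text{ not infected})$ for every $A\subset \Lambda_L$, and then to establish the latter via duality combined with the first-passage percolation and fast-extinction estimates already developed.

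First I would observe that since $\{0,1\}^{\Lambda_L}$ is finite, total variation convergence of the marginals is equivalent to pointwise convergence of all cylinder probabilities $\mu_N(\eta|_{\Lambda_L}=\omega)$. Writing $A_0=\{x\in\Lambda_L: \omega(x)=0\}$ and $A_1=\{x\in\Lambda_L:\omega(x)=1\}$, inclusion-exclusion expresses each such cylinder probability as a finite signed sum
\begin{align}
\mu(\eta|_{\Lambda_L}=\omega) = \sum_{B\subset A_1}(-1)^{|B|}\mu(A_0\cup B \text{ not infected}),
\end{align}
and analogously for $\mu_N$. Hence it suffices to verify $\mu_N(A\text{ not infected})\to\mu(A\text{ not infected})$ for every $A\subset \Lambda_L$.

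Next, I would invoke duality in both volumes. The infinite-volume side is given directly by \eqref{eq:mu-duality}: $\mu(A\text{ not infected})=\bP(\xi^A\text{ becomes extinct})$. For the finite-volume side, apply \eqref{eq:finite-duality} with the all-ones configuration as initial condition (which lies in $\Omega_N$) and with the target set $A$, let $t\to\infty$, and use convergence of the finite irreducible chain to its unique invariant measure $\mu_N$. Taking complements one obtains
\begin{align}
\mu_N(A\text{ not infected}) = \bP\!\left(\xi^A\text{ becomes extinct and } \xi^A_s\subset \Lambda_N \text{ for all } s\geq 0\right),
\end{align}
so that
\begin{align}
\mu(A\text{ not infected}) - \mu_N(A\text{ not infected}) = \bP\!\left(\xi^A\text{ becomes extinct and leaves }\Lambda_N \text{ at some time}\right).
\end{align}

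To bound the right-hand side I would split at time $T=(N-L)/(2c_3)$, where $c_3$ is the linear-growth speed appearing in the proof of Lemma \ref{lemma:discrepancy}. On the event that $\xi^A$ escapes $\Lambda_N$ by time $T$, a starting site in $A\subset\Lambda_L$ must be connected in first-passage percolation to a site in $\Lambda_N^c$, a distance of at least $N-L>c_3T$; by \eqref{eq:fpp1} and a union bound over $A$ and $\partial \Lambda_N$ this probability decays exponentially in $N-L$. The complementary event, on which $\xi^A$ is still alive at time $T$ yet becomes extinct eventually, is controlled directly by Lemma \ref{lemma:liggett}(b), giving a bound $c_1e^{-c_2T}$ uniform in $A$. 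Both terms vanish as $N\to\infty$ with $L$ fixed, completing the argument.

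The only mildly delicate point is the finite-volume duality identity for $\mu_N$, requiring that the all-ones start lies in $\Omega_N$ and that the finite chain converges from this start to $\mu_N$; this is immediate from finiteness and irreducibility of $\Omega_N$. Everything else is a direct application of machinery already established in Sections \ref{section:graphical}--\ref{section:Zd}, so I do not anticipate any substantive obstacle.
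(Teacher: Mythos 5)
Your proof is correct, and it takes a genuinely different route from the paper's. The paper works directly with the cylinder event $\{\eta\cap\Lambda_L = A\}$, expressing both $\mu$- and $\mu_N$-probabilities via joint survival/extinction of the dual processes $(\xi^a)_{a\in\Lambda_L}$ in a common graphical construction; it then introduces stopping times $\tau$, $\sigma_N$, $\tau_N$ and a good event $\cB_N$ to control the difference. You instead use inclusion--exclusion to reduce the matter to convergence of $\mu_N(A\text{ not infected})\to\mu(A\text{ not infected})$ for each fixed $A\subset\Lambda_L$, where both sides have clean one-line dual representations ($\bP(\xi^A\text{ extinct})$ and $\bP(\xi^A\text{ extinct}, \xi^A\text{ never leaves }\Lambda_N)$ respectively), so that the difference is exactly the probability that $\xi^A$ both leaves $\Lambda_N$ and dies out, controlled by the same FPP and fast-extinction estimates split at a time $T\asymp N$. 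The two arguments use exactly the same underlying ingredients (\eqref{eq:fpp1}, Lemma~\ref{lemma:liggett}, \eqref{eq:finite-duality}), but your decomposition avoids the joint dual coupling of several $\xi^a$'s and the auxiliary good-event bookkeeping, at the small cost of the inclusion--exclusion step and justifying the $t\to\infty$ limit in the finite-volume duality (which you correctly note is immediate since $\{\xi^A_t\neq\emptyset\}\setminus\{\xi^A\text{ survives}\}$ has vanishing probability). Arguably your version is a little more streamlined.
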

\begin{proof}
Fix $L>0$ and assume $N>3L$.
 Define for a set $A\subset \Lambda_L$
\begin{align}
 \tau(A) &:= \inf\left\{t\geq 0 : \xi_t^A = \emptyset \right\},	\\
 \sigma_N(A) &:= \inf\left\{t\geq 0 : \xi_t^A \cap \Lambda_N^c \neq \emptyset \right\},	\\
\tau_N(A) &:= 
\begin{cases}
\tau(A),\quad&\sigma_N(A)=\infty;\\
\infty, &\sigma_N(A)<\infty.               
\end{cases}
\end{align}
We use the usual convention that the infimum of the empty set is $\infty$. The stopping times defined above are the extinction time, hitting time of the boundary, and extinction time before hitting the boundary respectively. If the set $A$ is a singleton $\{a\}$, we simply write $\tau(a),\sigma_N(a), \tau_N(a)$.
Remember the comparison of the contact process with first passage percolation in Lemma \ref{lemma:discrepancy} and estimate \eqref{eq:fpp1}.
We have constants $c_1, c_2, c_3>0$ so that
\begin{align}\label{eq:fpp3}
 \bP( y \in \xi_{c_1 N}^{a} )\leq \bP(T(a,y)\leq c_1N) \leq c_2e^{-c_3N}
\end{align}
for all $a\in\Lambda_L, y\in \Lambda_N^c$. 
Let 
\[ \cB= \cB_N = \left\{\forall a\in \Lambda_L : \sigma_{N}(a) > c_1 N \right\}. \]
By \eqref{eq:fpp3} we have that $\lim_{N\to\infty}\bP(\cB_N^c)=0$.

Fix $A\subset\Lambda_L$. We want to prove that the probability of the event $\{\eta \cap \Lambda_L = A \}$ is almost the same under $\mu$ and under $\mu_N$, with a vanishing difference. To do so we use duality. Remember that by duality a site $x$ is infected under $\mu$ if and only if in the dual representation $\xi^x$ survives. Therewith the event $\{\eta \cap \Lambda_L = A \}$ can be reformulated via survival and extinction events in the dual formulation: Each $a\in A$ must survive, and each $a \in \Lambda_L\bs A$ must become extinct. This leads to
\begin{align}\label{eq:dual1}
 \int \ind_{\{\eta \cap \Lambda_L = A\}} \mu({d\eta}) = \bP\left(\forall a\in A: \tau(a)=\infty, \tau(\Lambda_L\bs A) <\infty\right).
\end{align}
For $\mu_N$ we have essentially the same argument, with the difference that an infection can also come from the boundary. Hence
\begin{align}\label{eq:dual2}
 \int \ind_{\{\eta \cap \Lambda_L = A\}} \mu_N({d\eta}) = \bP\left(\forall a\in A: \tau_N(a)=\infty, \tau_N(\Lambda_L\bs A) <\infty \right).
\end{align}
By Lemma \ref{lemma:liggett}, there are constants $c_4, c_5>0$ so that
\begin{align}\label{eq:extinction-estimate}
 \bP(c_1 N < \tau(\Lambda_L\bs A) < \infty) \leq c_4 e^{-c_5 N}.
\end{align}
By definition we have $\tau\leq \tau_N$, and hence
\begin{align}
 &\{\forall a\in A: \tau_N(a)=\infty \} \bs \{\forall a\in A: \tau(a)=\infty \} \\
&\quad= \{\exists a\in A: \tau(a)<\infty, \sigma_N(a) <\infty \}.
\end{align}
On the event $\cB$ we have 
\[ \{ \tau(\Lambda_L\bs A) \leq c_1 N \}=\{ \tau_N(\Lambda_L\bs A) \leq c_1 N \}. \]
Hence, using the formulations \eqref{eq:dual1} and \eqref{eq:dual2} and by focusing on the good events $\cB$ and $ \{ \tau(\Lambda_L\bs A) \leq c_1 N \}$, 
\begin{align}
 &\abs{\int \ind_{\{\eta \cap \Lambda_L = A\}} (\mu-\mu_N)({d\eta})} \\
&\leq \bP\left(\cB, \tau(\Lambda_L\bs A) \leq c_1 N, \exists a\in A: \tau(a)<\infty, \sigma_N(a)<\infty \right) \\
&\quad+ \bP(\cB, c_1N<\tau(\Lambda_L\bs A)<\infty) + \bP(\cB^c)	\\
&\leq \sum_{a\in A} \bP(\cB, \tau(a)<\infty, \sigma_N(a)<\infty)+ \bP(c_1N< \tau(\Lambda_L\bs A)<\infty) + \bP(\cB^c).
\end{align}
By \eqref{eq:extinction-estimate} and \eqref{eq:fpp3}, the last two terms go to 0 as $N\to\infty$. For the first term,
\begin{align}
 \bP(\cB, \tau(a)<\infty, \sigma_N(a)<\infty) & \leq \bP(\tau(a) < \infty, c_1N<\sigma_N(a)<\infty)	\\
&\leq \bP(c_1 N < \tau(a) < \infty),
\end{align}
which goes to 0 by \eqref{eq:extinction-estimate}. Hence
\[ \lim_{N\to\infty}\abs{\int \ind_{\{\eta \cap \Lambda_L = A\}} (\mu-\mu_N)({d\eta})}=0 \]
for all of the finitely many $A\subset \Lambda_L$, which proves the claim.
\end{proof}

\begin{lemma}\label{lemma:approximation}
 Let $f:\Omega \to\bC$ be a local function and $t\geq 0$. Then
\begin{align}
 \lim_{N\to\infty}\abs{\Var_\mu(P_t f )-\Var_{\mu_N}(P_{N,t} f)} = 0.
\end{align}
\end{lemma}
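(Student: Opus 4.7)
The plan is to reduce the claim to convergence of the first and second moments of $P_t f$ against the two measures, and then combine the graphical coupling with Lemma \ref{lemma:total-variation}. By invariance, $\int P_t f\,d\mu = \int f\,d\mu$ and $\int P_{N,t} f\,d\mu_N = \int f\,d\mu_N$; since $f$ is local with support in some $\Lambda_L$, Lemma \ref{lemma:total-variation} directly yields $\int f\,d\mu_N \to \int f\,d\mu$, so the mean contributions to the two variances match in the limit. It therefore suffices to show
\begin{align}
\lim_{N \to \infty} \abs{\int \abs{P_t f}^2 d\mu - \int \abs{P_{N,t} f}^2 d\mu_N} = 0,
\end{align}
which I would split by the triangle inequality into
\begin{align}
A_N := \abs{\int \bigl(\abs{P_t f}^2 - \abs{P_{N,t} f}^2\bigr) d\mu_N}
\quad\text{and}\quad
B_N := \abs{\int \abs{P_t f}^2\, d(\mu - \mu_N)}.
\end{align}

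For $A_N$ I would use the graphical coupling of $\xi^\eta_t$ and $\xi^\eta_{N,t}$ for $\eta\in\Omega_N$. Both processes start in $\eta$ and share the same infection arrows, but the finite process ignores recoveries outside $\Lambda_N$, so by monotonicity $\xi^\eta_t\subset\xi^\eta_{N,t}$ under the coupling. A discrepancy at $y\in\Lambda_L$ requires some infection path to reach $y$ in time $t$ from a suppressed recovery outside $\Lambda_N$---equivalently, by \eqref{eq:finite-duality}, that the backward process $\xi^y$ exits $\Lambda_N$ within time $t$---and by the first-passage percolation bound \eqref{eq:fpp1} this has probability at most $c_1 e^{-c_2(N-L)}$ once $N-L>c_3 t$. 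Summing over $y\in\Lambda_L$ and using $\abs{f(\xi^\eta_{N,t})-f(\xi^\eta_t)}\leq 2\supnorm{f}$ one obtains $\sup_{\eta\in\Omega_N}\abs{P_t f(\eta) - P_{N,t} f(\eta)}\to 0$, which together with the trivial bound $\abs{P_t f}, \abs{P_{N,t} f}\leq\supnorm{f}$ gives $A_N\to 0$.

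For $B_N$ I would exploit the fact that $P_t f$ is \emph{approximately local}. If $\eta,\eta'\in\Omega$ agree on $\Lambda_K$, the graphical coupling of $\xi^\eta_t$ and $\xi^{\eta'}_t$ produces agreement on $\Lambda_L$ unless an infection path from $\Lambda_K^c$ to $\Lambda_L$ occurs in time $t$; by \eqref{eq:fpp1} and summing over shells at distance $R\geq K-L$ from $\Lambda_L$, this probability is at most $c_4 e^{-c_5(K-L)}$ as soon as $K-L>c_3 t$. Consequently for every $\epsilon>0$ there is $K$ and a local function $g_\epsilon$ with support in $\Lambda_K$---obtained for instance by freezing $\eta$ to a reference configuration outside $\Lambda_K$---such that $\supnorm{P_t f - g_\epsilon}\leq\epsilon$; boundedness of $f$ then implies $\supnorm{\abs{P_t f}^2 - \abs{g_\epsilon}^2}\leq 2\supnorm{f}\epsilon$. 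Lemma \ref{lemma:total-variation} applied to the local function $\abs{g_\epsilon}^2$ now yields $\int \abs{g_\epsilon}^2\,d(\mu-\mu_N)\to 0$, so $\limsup_N B_N\leq 4\supnorm{f}\epsilon$, and letting $\epsilon\to 0$ finishes the proof.

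The main obstacle will be the almost-locality estimate for $P_t f$ needed for $B_N$: one has to control the influence of coordinates far outside $\Lambda_L$ on $P_t f(\eta)$ uniformly in $\eta$. This is supplied by the very first-passage percolation comparison already used in Lemma \ref{lemma:discrepancy}, so the rest of the argument is essentially bookkeeping with triangle inequalities and the order of limits $N\to\infty$ then $\epsilon\to 0$.
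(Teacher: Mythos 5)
Your proposal is correct and follows essentially the same route as the paper: the same decomposition into the two second-moment terms (your $A_N$ and $B_N$ are exactly the paper's middle and first terms, with the mean contribution handled by noting $\int f\,d\mu_N\to\int f\,d\mu$ rather than by normalizing $\int f\,d\mu=0$ and carrying a third term $\abs{\int f\,d\mu_N}^2$), the same graphical/dual argument reducing $A_N$ to the event that $\xi^{\supp f}_s$ exits $\Lambda_N$ before time $t$, and the same approximation of $P_tf$ by local functions plus Lemma \ref{lemma:total-variation} for $B_N$. You merely make the first-passage-percolation bounds explicit in the two places the paper leaves them implicit, which is a useful fleshing-out but not a different proof.
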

\begin{proof}
Without loss of generality assume $\int f \,d\mu =0$. We have
\begin{align}
 &\abs{\Var_\mu(P_t f )-\Var_{\mu_N}(P_{N,t} f)} \\
&\leq \abs{\int \abs{P_tf}^2 d(\mu-\mu_N)}+\abs{\int \abs{P_tf}^2-\abs{P_{N,t}f}^2d\mu_N}+\abs{\int f \,d\mu_N}^2. \label{eq:approximation-1}
\end{align}
We consider the terms separately and start with the middle one. For any $\eta\in\Omega_N$, we have
\begin{align}
 \abs{\abs{P_tf(\eta)}^2-\abs{P_{N,t}f(\eta)}^2} &\leq \abs{P_tf(\eta)-P_{N,t}f(\eta)} 2\norm{f}_\infty	\\
&\leq \bP\left (\exists\, x \in \supp(f) : \xi^\eta_t(x) \neq \xi^\eta_{N,t}(x)\right) 4 \norm{f}^2_\infty .
\end{align}
 Here we can use the graphical construction.  We see that for $\xi^\eta_t$ and $\xi^\eta_{N,t}$ to differ inside $\supp(f)$ there must be an infection path from the boundary of $\Lambda_N$ to a site in $\supp(f)$. By reversing time and looking at the dual process, we get an upper bound independent of $\eta$:
\begin{align}
 \bP\left (\exists\, x \in \supp(f) : \xi^\eta_t(x) \neq \xi^\eta_{N,t}(x) \right) \leq \bP\left(\exists\, s\in[0,t]: \xi_s^{\supp(f)}\not\subset\Lambda_N \right).
\end{align}
Hence the middle term of \eqref{eq:approximation-1} converges to 0 as $N\to\infty$.
The last term goes to 0 directly as a consequence of Lemma \ref{lemma:total-variation}, since $f$ is local with $\int f \,d\mu =0$. The first term also vanishes as a consequence of Lemma \ref{lemma:total-variation} after we approximate $(P_t f)^2$ by local functions.
\end{proof}

\begin{theorem}
 The supercritical contact process on $\bZ^d$ has a spectral gap of at least $\alpha/2$.
\end{theorem}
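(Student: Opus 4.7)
The plan is to lift the spectral gap from $L_N$, established uniformly in $N$ in Proposition \ref{prop:finite-gap}, to the infinite-volume generator $L$ using the approximation results of this section. I will work with the variance-decay formulation of the spectral gap, which the main theorem asserts is equivalent to the resolvent statement.

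I first establish $\Var_\mu(P_t f) \leq e^{-\alpha t}\Var_\mu(f)$ for local $f$. For such a function with support in some $\Lambda_L$ and $N \geq L$, we can view $f \in L^2(\mu_N)$, and we aim for the finite-volume version
\begin{equation*}
\Var_{\mu_N}(P_{N,t} f) \leq e^{-\alpha t}\Var_{\mu_N}(f),
\end{equation*}
uniformly in $N$. Once this is obtained, Lemma \ref{lemma:approximation} gives $\Var_\mu(P_t f) = \lim_{N\to\infty} \Var_{\mu_N}(P_{N,t} f)$, while Lemma \ref{lemma:total-variation} gives $\Var_{\mu_N}(f) \to \Var_\mu(f)$ (since $f$ is local), so passing to $N \to \infty$ yields the desired infinite-volume inequality for local $f$.

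To extend from local to general $f \in L^2(\mu)$, I use density: local functions are dense in $L^2(\mu)$, and both $f \mapsto \Var_\mu(P_t f)$ and $f \mapsto \Var_\mu(f)$ are continuous in $f \in L^2(\mu)$ (the former via the $L^2$-contractivity of $P_t$, which follows from the invariance of $\mu$). Taking the limit in $f$ in the inequality extends it to all of $L^2(\mu)$, which by the equivalence in the main theorem produces the spectral gap of at least $\alpha/2$.

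The main obstacle is establishing the clean finite-volume variance bound uniformly in $N$. Since $L_N$ is not self-adjoint on $L^2(\mu_N)$ (the contact process being irreversible), the spectral gap only constrains the eigenvalues of $L_N$ and does not by itself yield operator-norm decay of $P_{N,t}$ on the mean-zero subspace. A natural way to close this gap is to apply Gelfand's spectral radius formula to the discrete-time semigroup $(P_{N,k t_0})_{k\geq 0}$ for a well-chosen step $t_0$, using the uniform polynomial-times-exponential bound of Proposition \ref{prop:finite-var-decay} to absorb any non-normality correction uniformly in $N$ (at the cost of a slight modification of $\alpha$, which is harmless for the conclusion).
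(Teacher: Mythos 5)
Your plan coincides with the paper's: apply the finite-volume variance-decay bound, pass $N\to\infty$ via Lemmas \ref{lemma:total-variation} and \ref{lemma:approximation}, then extend from local $f$ to all of $L^2(\mu)$ by density together with $L^2$-contractivity of $P_t$. In fact the paper writes $\Var_{\mu_N}(P_{N,t}f)\leq e^{-\alpha t}\Var_{\mu_N}(f)$, cites Proposition \ref{prop:finite-gap} for it, and then telescopes against $\Var_\mu(P_tf)$ exactly as you outline.

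You are right to flag the finite-volume inequality as the delicate point. Proposition \ref{prop:finite-gap} only locates the nonzero eigenvalues of $L_N$ in $\{\Re(\lambda)\geq\alpha/2\}$, and for a non-normal matrix this does not by itself yield $\Var_{\mu_N}(P_{N,t}f)\leq e^{-\alpha t}\Var_{\mu_N}(f)$; that bound, holding for every mean-zero $f$ and every $t\geq0$, is equivalent (by differentiating at $t=0$ and Gr\"onwall) to a Poincar\'e inequality with constant $\alpha/2$ for the symmetric part of $L_N$, which is strictly stronger than the eigenvalue statement. The paper proceeds through this step without remark, so your scrutiny is sharper than what is on the page.

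That said, the repair you propose does not close the gap. Gelfand's formula controls $\|(P_{N,t_0})^k\|$ on the mean-zero subspace only after an $N$-dependent burn-in (equivalently, with an $N$-dependent prefactor), and the subsequent limit $N\to\infty$ requires uniformity in $N$. Proposition \ref{prop:finite-var-decay} cannot supply that uniformity, because its right-hand side carries the seminorm $\sum_{x\in\Lambda_N}\delta_f(x)^2$, which is not dominated by $\Var_{\mu_N}(f)$ up to an $N$-independent constant: for $f(\eta)=\prod_{x\in\Lambda_N}(-1)^{\eta(x)}$ one has $\sum_x\delta_f(x)^2=4\abs{\Lambda_N}$ while $\Var_{\mu_N}(f)\leq 1$. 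So your proposal correctly identifies the real obstacle, but the Gelfand-plus-Proposition-\ref{prop:finite-var-decay} step, as described, does not deliver the uniform finite-volume contraction, and you would need a separate argument for that inequality before the rest of the scheme (which matches the paper's) can be executed.
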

\begin{proof}
 Let $f:\Omega\to\bC$ local. Using the spectral gap for the finite system, Proposition \ref{prop:finite-gap}, for any $N$,
\begin{align}
 \Var_\mu(P_t f) &= \Var_\mu(P_tf) - \Var_{\mu_N}(P_{N,t} f) + \Var_{\mu_N}(P_{N,t} f)	\\
&\leq \Var_\mu(P_tf) - \Var_{\mu_N}(P_{N,t} f) + e^{-\alpha t} \Var_{\mu_N}(f)	\\
&= e^{-\alpha t} \Var_{\mu}(f) + \Var_\mu(P_tf) - \Var_{\mu_N}(P_{N,t} f) \\
&\quad + e^{-\alpha t}\left(\Var_{\mu_N}(f)-\Var_{\mu}(f)\right).
\end{align}
When sending $N$ to infinity, we have by virtue of Lemma \ref{lemma:approximation}
\begin{align}
 \Var_\mu(P_t f) &\leq e^{-\alpha t} \Var_{\mu}(f)
\end{align}
Approximating $f\in L^2(\mu)$ by local functions completes the proof.
\end{proof}

\bibliography{BibCollection}{}
\bibliographystyle{plain}

\end{document}